\renewcommand\section{\@startsection {section}{1}{\z@}%
{-3.5ex \@plus -1ex \@minus -.2ex}%
{2.3ex \@plus.2ex}%
{\raggedright\normalfont\large\bfseries}}
\renewcommand\subsection{\@startsection {subsection}{1}{\z@}%
{-3.5ex \@plus -1ex \@minus -.2ex}%
{2.3ex \@plus.2ex}%
{\raggedright\normalfont\bfseries}}
\newtheorem*{thmA}{Theorem~A}
\newtheorem*{corA}{Corollary~A}
\newtheorem*{thmB}{Theorem~B}
\newtheorem{lemma}{Lemma}
\newtheorem*{pbS}{Problem (S)}
\newtheorem*{pbL}{Problem (L)}
\newtheorem*{pbC}{Problem (C)}
\theoremstyle{definition}
\newtheorem{Def}{Definition}
\theoremstyle{remark}
\newtheorem{rem}{Remark}
\def\dst{\displaystyle}
\def\ens{\enspace}
\def\al{\alpha}
\def\be{\beta}
\def\ga{\gamma}
\def\Ga{{\Gamma}}
\def\de{\delta}
\def\De{\Delta}
\def\eps{{\varepsilon}}
\def\ka{\kappa}
\def\la{\lambda}
\def\La{\Lambda}
\def\Om{\Omega}
\def\th{{\theta}}
\newcommand{\ph}{\varphi}
\def\ze{{\zeta}}
\newcommand{\demi}{\frac{1}{2}}
\newcommand{\ao}{\{\,}          
\newcommand{\af}{\,\}}          
\def\ov{\overline}
\newcommand{\dist}{\operatorname{dist}}
\newcommand{\length}{\operatorname{length}}
\newcommand{\ID}{\operatorname{Id}}
\def\pa{\partial}
\def\ii{^{-1}}
\def\ti{\tilde}
\def\IM{\mathop{\Im m}\nolimits}
\def\RE{\mathop{\Re e}\nolimits}
\def\ie{{i.e.}}
\def\eg{{e.g.}}
\def\wrt{{with respect to}}
\def\C{\mathbb{C}}
\def\D{\mathbb{D}}
\def\E{\mathbb{E}}
\def\N{\mathbb{N}}
\newcommand{\PP}{\widehat{\mathbb{C}}}
\def\Q{\mathbb{Q}}
\def\R{\mathbb{R}}
\renewcommand{\SS}{\mathbb{S}}
\def\Z{\mathbb{Z}}
\def\cA{\mathcal{A}}
\def\cE{\mathcal{E}}
\def\cF{\mathcal{F}}
\def\cI{\mathcal{I}}
\def\cJ{\mathcal{J}}
\def\cL{\mathcal{L}}
\def\cN{\mathcal{N}}
\def\cR{\mathcal{R}}
\newcommand{\I}{{\mathrm i}}
\newcommand{\dd}{{\mathrm d}}
\def\ee{{\mathrm e}}
\def\e#1{\ee^{#1}}
\DeclareMathOperator{\DC}{DC}
\DeclareMathOperator{\Exp}{Exp}
\newcommand{\DCgt}{\DC_{\ga,\tau}}
\newcommand{\ld}{\langle}
\newcommand{\rd}{\rangle}
\def\pp#1{^{(#1)}}
\def\norm#1{\Vert#1\Vert}
\newcommand{\IN}{^{\textnormal{(i)}}}
\newcommand{\EX}{^{\textnormal{(e)}}}
\newcommand{\gB}{\mathscr B}
\newcommand{\gC}{\mathscr C}
\newcommand{\gF}{\mathscr F}
\newcommand{\gH}{\mathscr H}
\newcommand{\gM}{\mathscr M}
\newcommand{\gO}{\mathscr O}
\newcommand{\gV}{\mathscr V}
\newcommand{\Chol}{\gC^1_{\textrm{hol}}}
\newcommand{\Cinfhol}{\gC^\infty_{\textrm{hol}}}
\newcommand{\ovci}[1]{\overset{\circ}{#1}}
\newcommand{\ovint}[2]{\ovci{\raisebox{0ex}[#2ex]{$#1$}}}
\newcommand{\str}[1]{\!\raisebox{0ex}[#1ex]{\,}}
\newcommand{\FF}{\cF_{q,\eps}}
\newcommand{\tn}[1]{\textnormal{(#1)}}
\begin{document}

\thispagestyle{empty}

\begin{center}

  {\LARGE\bf A quasianalyticity property for monogenic\\[1ex]
      solutions of small divisor problems } 

\bigskip

\bigskip

{\bf Stefano Marmi, David Sauzin} {\small (16 September 2010)}

\end{center}

\bigskip

\bigskip


\begin{abstract}
  We discuss the quasianalytic properties of various spaces of functions
  suitable for one-dimensional small divisor problems.
  These spaces are formed of functions $\gC^1$-holomorphic on certain compact
  sets~$K_j$ of the Riemann sphere (in the Whitney sense), as is the solution of
  a linear or non-linear small divisor problem when viewed as a function of the
  multiplier (the intersection of~$K_j$ with the unit circle is defined by a
  Diophantine-type condition, so as to avoid the divergence caused by roots of
  unity).
  It turns out that a kind of generalized analytic continuation through the unit
  circle is possible under suitable conditions on the $K_j$'s.
\end{abstract}

\bigskip


\setcounter{section}{-1}

\section{Introduction}

Following V.~Arnold and M.~Herman, and in the same line of research as in
\cite{MS1}, we consider ``monogenic functions'' in the sense of \'Emile Borel
with a view to small divisor problems.
In these problems of dynamical origin, there is a complex parameter~$q$, called
multiplier, which must be kept off the roots of unity in order to solve a
functional equation; typically, $q$~is the eigenvalue at a fixed point of a
one-dimensional complex map that one wants to linearize and one studies the
equation (corresponding to the so-called Siegel problem)
\begin{equation}	\label{eqSieg}
h(q,qz) = q G\bigl(h(q,z)\bigr)
\end{equation}
(where $G(z)\in z\C\{z\}$ is given, with $G'(0)=1$, 
and $h(q,\,.\,)$ is sought in a Banach space of functions holomorphic in the variable~$z$),
or the linearized equation $h(q,qz) - q h(q,z) = q g(z)$ (with $g(z)\in
z^2\C\{z\}$ given),
or the more complicated non-linear equation corresponding to the conjugacy
between a circle map and a rigid rotation with rotation number
$\frac{1}{2\pi\I}\log q$ (see equation~(\ref{eqpbC})).

We are interested in the dependence of the solution on the multiplier~$q$.
Roots of unity act as resonances, because the coefficients of the
solution of the problem are inductively defined by expressions which involve
division by $q^k-1$, $k\ge1$. 
On the other hand the case where $|q|=1$ is particularly interesting from the
dynamical point of view.
One is thus led to define compact sets~$K_j$ of
the Riemann sphere~$\PP$ by removing smaller and smaller neighbourhoods of the
roots of unity.
It is shown in \cite{He} and \cite{CM} for the above-mentioned non-linear
problems and in \cite{MS1} for the linear one, that the
solution is Whitney smooth on the $K_j$'s, which gives rise to an example of
``monogenic'' function (the definition is recalled in Section~\ref{secMonFcn}).
In all the cases we consider, the union of the~$K_j$'s on which our monogenic
functions are defined contains $\PP\setminus\SS$, where~$\SS$ denotes the unit
circle, and also a subset of~$\SS$ defined by an arithmetical condition (Bruno
or Diophantine condition); in restriction to $\PP\setminus\SS$, the functions
are analytic in the usual sense.

From the point of view of classical analytic continuation, the unit circle~$\SS$
appears as a natural boundary, because of the density of the roots of unity, but
the question arises whether ``monogenic continuation'' through~$\SS$ is possible.
A related important issue, as emphasized by M.~Herman, is that of
quasianalyticity: Is a monogenic function determined by its Taylor series at a
point?  And indeed the Taylor series is well defined at points of~$\SS$
corresponding to a Diophantine-type condition, but this series is divergent.
At the end of \cite{He}, Herman writes: ``we believe that \'E.~Borel (\ldots)
  wanted his monogenic functions to have quasianalytic properties
  (\ie\ monogenic continuation),''
but ``the (solution of the) linearized equation does not seem to belong to any
quasianalytic class''.
This is confirmed by our work \cite{MS1} (see also Remark~\ref{remNonQA} in Section~\ref{secStatRes}).

The question of quasianalyticity can also be raised at each point of
$\PP\setminus\SS$ (where convergent Taylor series are available) and, though
easier to answer, it is still non-trivial, because the domain of analyticity is
not connected.

Instead of the traditional notion of quasianalyticity, one
may consider a weaker property: 
we shall speak of ``$\gH^1$-quasianalyticity'' whenever the functions are
determined by their restriction to any subset of positive linear Hausdorff
measure (see Section~\ref{secVarQA}).
%
%
The subject of the Part~A of this article is to prove such a property for spaces
of monogenic functions defined on compact sets of~$\PP$ of a certain kind
(Section~\ref{secStatRes}); we shall see in Part~B that these spaces are large
enough to contain the monogenic functions which appear in small divisor problems,
so that we obtain a form of monogenic continuation across the unit circle \wrt\
the multiplier.
More specifically, the small divisor problems considered in Part~B are: 
\begin{enumerate}[--]
\item The Siegel problem~(\ref{eqSieg}), the solution of which is shown to be monogenic in
\cite{CM}, relatively to compact sets~$K_j$ described in
Section~\ref{secSDdom}; their union intersects~$\SS$ along a set corresponding
to the Bruno condition (optimal for this problem).
\item The linearized problem, the solution of which is shown to be monogenic in
\cite{MS1}, relatively to compact sets described in Section~\ref{secSDdom}.
\item The complexified local conjugacy problem of circle maps described under the name
Problem~\tn{C} in Section~\ref{secIntroSDpb}.
\end{enumerate}
In the last case, one is given a family of maps of the form 
$\th \mapsto G_{\al,\eps}(\th) = \th + \al + \eps g(\th)$
with a holomorphic $1$-periodic function~$g$ of zero mean-value.
The relevant multiplier turns out to be $q=\ee^{2\pi\I\al}$, while $\eps$ is here a
small complex parameter.
The equation to be solved is
\begin{equation}	\label{eqpbC}
u(\th+\al) - u(\th) + \be = \eps g\big(\th+u(\th)\big),
\end{equation}
where one looks for $\be\in\C$ and a holomorphic $1$-periodic function~$u$ of
zero mean-value.
As explained in Section~\ref{secIntroSDpb}, this amounts to
conjugating~$G_{\al-\be,\eps}$ to the rigid rotation~$G_{\al,0}$.
We speak of complexified problem because $\al$, $\th$ and $\eps g$ are not
assumed to be real.

The question of the monogenic regularity of~$(\be, u)$ \wrt~$\al$ (or,
equivalently, \wrt~$q$) 
was raised by Arnold in \cite{Ar} without an answer, due to limitations of the method employed
there. Later, in \cite{He}, Herman proved the monogenicity of the solution relatively to
compact sets defined by means of a Diophantine condition\footnote{
We followed quite closely \cite{He} and sticked to this Diophantine condition,
although one could have tried to adapt the results of
\cite{Ri} which deals with the Bruno condition (optimal for this problem) by
means of Yoccoz's renormalization method.}
and contained in a narrow strip $\{ |\IM\al|<\rho \}$ in the complex domain.

We shall be able to extend Herman's regularity result up to domains~$K_j$ of the
kind which is required to apply the quasianalyticity result of Part~A.
Indeed, when complexifying the problem of real circle maps, it may seem natural
to focus on a strip for~$\al$, which corresponds to a neighbourhood of~$\SS$ in~$\PP$ for
the multiplier~$q$, but it is important for our quasianalyticity results that
the domains for~$q$ extend up to~$0$ and~$\infty$.

As a consequence, we obtain for instance that the solution of any of the
three above-mentioned small divisor
problems with any given multiplier is (at least theoretically) determined by the
solution of the same problem for a small set of values of the multiplier
(provided this set has positive linear measure) or by the Taylor series of the
solution \wrt\ the multiplier at any other point of $\PP\setminus\SS$.

\bigskip

\bigskip


\begin{center}

{\Large\bf Part A: A quasianalyticity property}

\end{center}


\section{Various notions of quasianalyticity}

\label{secVarQA}


In this article we call ``non-trivial path'' the image of any non-constant
continuous map from~$[0,1]$ to $\PP=\C\cup\{\infty\}$, ``Jordan arc'' the image
of a continuous injective map from~$[0,1]$ or~$(0,1)$ to~$\PP$, and ``Jordan
curve'' the image of a continuous injective map from $\R/\Z$ to~$\PP$.
The one-dimensional Hausdorff outer measure in~$\C$ will be
denoted~$\gH^1$; we extend it to~$\PP$ by setting $\gH^1(A) =
\gH^1(A\setminus\{\infty\})$ for any $A\subset\PP$ (in fact, what will
matter for us will not be the precise value of~$\gH^1(A)$, but whether
it is positive or not).

The following definition is inspired by T.~Carleman \cite[p.2]{Car} and
A.~Beurling \cite[p.396]{Beur} (see also \cite[p.275]{Kolog}).
\begin{Def}    \label{defQA}
  Let $K'\subset K$ be subsets of~$\PP$ and $E$ be a linear space of
  functions on~$K$ with values in a complex Banach space.
  \begin{itemize}
  \item A subset~$\ga$ of~$K$ is said to be a {\em uniqueness set} for~$E$
    if the only function of~$E$ vanishing identically on~$\ga$ is the
    function $f\equiv0$.
%
%
  \item We say that~$E$ is {\em $\gH^1$-quasianalytic relatively to~$K'$}
    if any subset of $K'$ of positive $\gH^1$-measure is a
    uniqueness set for~$E$.
  \end{itemize}
\end{Def}

Since every non-trivial path has positive $\gH^1$-measure (see \eg\
\cite[p.29]{Falco}), $\gH^1$-quasianalyticity relatively to~$K'$ implies that
any non-trivial path contained in~$K'$ is a uniqueness set, a property which
could be termed {\em pathwise quasianalyticity relatively to~$K'$}.
%

As is well-known, if $\La$ is a Jordan arc, then $\gH^1(\La)$ coincides with its
length,~$|\La|$.
When this number is finite (\ie\ when the arc avoids~$\infty$ and is rectifiable
with respect to the usual distance of~$\C$), for any~$U$ open in~$\La$ one can
define $\length(U) = \sum|U_j|$, where the $U_j$'s are its connected components;
setting, for any subset~$A$ of~$\La$, $\length_\La(A) = \inf\{ \length(U)\,;\; 
\text{$U$ open in~$\La$, $A\subset U$} \}$,
we then have\footnote{
Indeed, the identity~(\ref{eqarcHausd}) holds for all open subsets of~$\La$ and
$\gH^1$ is a Borel-regular measure on~$\C$ \cite[\S\S2.10.2, 2.10.13]{Fed}; the
measure it induces on~$\La$ is Borel-regular and finite, thus
  $\gH^1(A) = \inf\{ \gH^1(U)\,;\; \text{$U$ open in~$\La$, $A\subset
    U$} \}$ for each $A\subset\La$
\cite[\S\S2.2.2--2.2.3]{Fed}.
}
\begin{equation}   \label{eqarcHausd}
\gH^1(A) = \length_\La(A).
\end{equation}

\begin{Def}    \label{defHadaQA}
  Let~$q_0$ be a non-isolated point of $K\subset\PP$ and~$E$ be a
  linear space of functions on~$K$ with values in a complex Banach space~$B$, such
  that each function of~$E$ admits an asymptotic expansion at~$q_0$.
  We say that~$E$ is {\em quasianalytic at~$q_0$} if the only function with zero
asymptotic expansion at~$q_0$ is the function $f\equiv0$.
\end{Def}

Recall that a function $f\colon K\to B$ is said to admit an asymptotic expansion at~$q_0$ if there
exists a sequence $(a_n)_{n\in\N}$ of elements of~$B$ such that,
for every $N\in\N$,
$(q-q_0)^{-N}\bigl( f(q) - \sum_{n=0}^{N} a_n (q-q_0)^n \bigr)$
tends to~$0$ as $q\to q_0$ with the constraint $q\in K$. 
The sequence of coefficients is then unique:
\begin{equation}	\label{eqdefAE}
a_N = \lim_{q\to q_0} (q-q_0)^{-N}\left( f(q) - \sum_{n=0}^{N-1} a_n (q-q_0)^n
\right),
\qquad N\in\N.
\end{equation}
This hypothesis is met if $f$ is analytic at~$q_0$, but also when $K$ is closed
and $f$ is Whitney-differentiable infinitely many times in the complex sense (\ie\
$\gC^\infty$-holom\-orphic, see below) on~$K$.

According to Definition~\ref{defHadaQA}, quasianalyticity at~$q_0$ means that
the functions are determined by the coefficients~$a_n$ of their asymptotic
expansions.
Observe that this implies that any set $\ga\subset K$ of which~$q_0$ is a limit
point is a uniqueness set for~$E$. Indeed, formula~(\ref{eqdefAE}) shows that
the coefficients of the asymptotic expansion of a function are inductively
determined by its restriction to~$\ga$.

The usual notion of quasianalyticity (in the sense of Hadamard) is
quasianalyticity at every point (see \eg\ \cite{Th}).
The latter property is a priori stronger than $\gH^1$-quasianalyticity
relatively to~$K$ (because any set~$\ga$ of positive $\gH^1$-measure
has a limit point in it).\footnote{
Notice however that, for the Denjoy-Carleman classes of an interval of the real
line, the notions of $\gH^1$-quasianalyticity (or pathwise quasianalyticity)
relatively to this interval and Hadamard quasianalyticity coincide---see \eg\
\cite[p.9]{Car}---but this has to do with the one-dimensional
character of the interval, whereas we shall rather be interested in compact
subsets~$K$ of~$\PP$ not contained in any line.
}

If the interior of~$K$, henceforth denoted by~$\ovint{K}{1.45}$, has several
connected components, the pathwise or $\gH^1$-quasianalyticity of~$E$ relatively
to~$\ovint{K}{1.45}$
%
%
%
imply a form of {\em coherence}: if two functions of~$E$
coincide in one of the connected components of~$\ovint{K}{1.45}$, then they
coincide everywhere;
given a function of~$E$, one may also think of its restriction to any of the
components as of the ``pseudocontinuation'' or ``generalized analytic
continuation'' of its restriction to one of them, even though analytic
continuation may be impossible (compare with \cite[pp.18,49]{RS}).
Similar remarks apply when $E$ is quasianalytic at the points of~$\ovint{K}{1.45}$.


\section{$\gC^1$-holomorphic functions and monogenic functions}
%
\label{secMonFcn}
%
%
As in \cite{MS1}, we are interested in functions which are
$\gC^1$-holomorphic on compact sets of the Riemann sphere, \ie\ these
functions are Whitney-differentiable and satisfy the Cauchy-Riemann
equations;
equivalently, for a compact set~$K$ in~$\PP$ and a complex Banach
space~$B$, we say that $f \colon K \to B$ is $\gC^1$-holomorphic if it
is continuous and there exists a continuous $f\pp1 \colon K \to B$ such
that: for all $q\in K$ and $\eps>0$, there exists $\de>0$ with
$$
q_1, q_2 \in K, \ens |q_1-q|, |q_2-q| \leq \de 
\ens \Rightarrow \ens
\norm{ f(q_2) - f(q_1) - (q_2-q_1) f\pp1(q) } \leq \eps|q_2-q_1|
$$
(using inversion if $q=\infty$, as usual).
We then use the notation $f \in \Chol(K,B)$; the linear space of functions we
get can be made a complex Banach space by choosing appropriately a norm
$\norm{\,.\,}_{\Chol(K,B)}$ (see \cite[\S2.1]{MS1}). 
The definition of $\gC^\infty$-holomorphic functions on~$K$ is in the same vein
(op.cit.; \cite{Ri}).

For the moment we impose no restriction on the compact sets~$K$ we consider,
but in Section~\ref{secStatRes} we shall restrict ourselves to very specific
ones (see Figure~\ref{figCurves}).

\begin{Def}
Suppose $(K_j)_{j\in\N}$ is a monotonic non-decreasing sequence of compact subsets of~$\PP$
and $(B_j)_{j\in\N}$ is a monotonic non-decreasing sequence of complex Banach
spaces with continuous injections $B_j \hookrightarrow B_{j+1}$.
The corresponding space of {\em monogenic functions} is the Fr\'echet space obtained
as the projective limit of Banach spaces
\begin{multline*}
\gM\bigl( (K_j), (B_j) \bigr) = \varprojlim \cA_J, \\[1ex]
\cA_J =  \bigcap_{0\le j\le J}\Chol(K_j,B_j), \qquad
\norm{f}_{\cA_J} = \max_{0\le j\le J} \norm{f_{|K_j}}_{\Chol(K_j,B_j)}.
\end{multline*}
\end{Def}

Indeed, the $\cA_J$'s with the continuous injections $\cA_J \hookrightarrow
\cA_{J-1}$ give rise to a projective system
and the projective limit $\gM\bigl( (K_j), (B_j) \bigr)$ is a complete
topological vector space for the family of semi-norms
$\bigl(\norm{\,.\,}_{\cA_J}\bigr)_{J\ge0}$.
As a set, $\gM\bigl( (K_j), (B_j)
\bigr)$ consists of all the functions which are defined in
$\gF = \bigcup_{j\in\N} K_j$
and such that, for every $j\in\N$, the restriction $f_{|K_j}$ belongs to
$\Chol(K_j,B_j)$
(this space may depend on the sequence~$(K_j)_{j\in\N}$ rather than on~$\gF$ only).

In \cite{He} or \cite{MS1}, the definition is given with a fixed Banach space
$B=B_j$ for all $j\in\N$, in which case $\cA_J = \Chol(K_J,B)$.
Typically, $B$ is the Hardy space $H^\infty(\D_r)$ consisting of bounded
holomorphic functions in a disk $\D_r = \{ |z|<r \}$.
When applying these ideas to the linear small divisor problem described in
Section~\ref{secIntroSDpb}, the drawback of keeping~$B_j$ constant as in \cite{MS1}
is that the optimal arithmetical condition (see~(\ref{eqLinOpt}) below)
cannot be reached: $\gF$ is smaller than it could be.
Similarly, in the Siegel problem, capturing all the points of the unit circle
which satisfy the Bruno condition~(\ref{eqBrunoClassic}) requires to consider a
sequence of decreasing disks.
We shall thus take $B_j = H^\infty(\D_{r_j})$ with $r_j \downarrow 0$ in these applications.

In all these cases,
each~$K_j$ will be a compact arcwise connected subset of~$\PP$,
which intersects the unit circle along a Cantor set avoiding the roots of unity,
the interior of which has two connected components, one inside and the other
outside the unit circle, while
\begin{equation}  \label{eqintCclosC}
\bigcup_{j\in\N} \ovint{K}{1.45}_j = \{\, |q|<1 \,\} \cup \{\, |q|>1 \,\}
\ens\subset\ens \gF  = \bigcup_{j\in\N} K_j \ens\subset\ens
\ov \gF = \PP
\end{equation}
(both inclusions will be strict).

Obviously, for any compact~$K$ and Banach space~$B$, we have the inclusion
\begin{equation}	\label{eqdefgO}
\Chol(K,B) \subset \gO(K,B) = \{\, f \colon K \to B \;
\text{continuous in~$K$ and holomorphic in~$\ovint{K}{1.45}$}
\,\}.
\end{equation}
In the results of the following section, it is in fact~$\gO(K,B)$ itself which
will be proved to enjoy quasianalyticity properties in certain circumstances,
and this will imply similar properties for the smaller space $\Chol(K,B)$.
We thus find it worthwhile to mention an inclusion which goes in the reverse
direction. Recall that the inner boundary of~$K$ is defined as $\pa K\setminus\bigcup\pa
U_\ell$ where the $U_\ell$'s denote the connected components of~$\PP\setminus K$.
\begin{lemma}
Assume that the inner boundary of~$K$ is contained in an analytic curve and that
the boundary of each connected component~$U_\ell$ of~$\PP\setminus K$ is a union of
rectifiable Jordan curves.
Suppose that $K$ contains a compact~$\ti K$ such that any two close enough
points $q$,~$q'$ of~$\ti K$ can be joined by a rectifiable path of length $\le
c|q-q'|$ inside~$\ti K$, and
\begin{equation}	\label{ineqseriescc}
q\in \ti K \ens\Rightarrow\ens
\sum_\ell \int_{\pa U_\ell} \frac{|\dd\ze|}{|\ze-q|^3} \le C,
\end{equation}
where $c$ and~$C$ are positive constants. Then  $\gO(K,B) \subset \Chol(\ti K,B)$.
\end{lemma}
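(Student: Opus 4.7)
The plan is to represent any $f \in \gO(K,B)$ by a Cauchy-type integral along the curves $\partial U_\ell$, differentiate it formally to produce a candidate for the Whitney derivative, and verify the defining conditions of $\Chol(\ti K,B)$ using~(\ref{ineqseriescc}) and an elementary convexity inequality.

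First I would observe that~(\ref{ineqseriescc}) forces $\ti K \subset \ovint{K}{1.45}$: if $q$ lay on some rectifiable Jordan curve $\partial U_{\ell_0}$, the integral $\int_{\partial U_{\ell_0}}|\zeta-q|^{-3}\,|\dd\zeta|$ would diverge, so the hypothesis would fail. Each $q\in\ti K$ therefore lies in a definite component $V(q)$ of $\ovint{K}{1.45}$, and Cauchy's theorem applied inside $V(q)$, whose boundary is made up of certain rectifiable Jordan curves $\partial U_\ell$, gives
\begin{equation*}
  f(q) = -\sum_{\ell \in L(V(q))} \frac{1}{2\pi\I} \int_{\partial U_\ell} \frac{f(\zeta)}{\zeta-q}\,\dd\zeta, \qquad L(V) := \{\ell : \partial U_\ell \subset \partial V\},
\end{equation*}
each $\partial U_\ell$ being given its standard orientation (with $U_\ell$ on the left). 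Differentiation under the integral sign singles out the natural candidate $\varphi(q) := -\sum_{\ell \in L(V(q))} \frac{1}{2\pi\I} \int_{\partial U_\ell} f(\zeta)(\zeta-q)^{-2}\,\dd\zeta$ for $f\pp1(q)$.

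The chord-arc hypothesis enters crucially in the Whitney step: for $q_1,q_2\in\ti K$ close enough, the joining path lies in $\ti K\subset \ovint{K}{1.45}$, so $q_1$ and $q_2$ belong to the \emph{same} component $V$ of $\ovint{K}{1.45}$ and the same representation is available for both. The elementary identity $\tfrac{1}{\zeta-q_2}-\tfrac{1}{\zeta-q_1}-\tfrac{q_2-q_1}{(\zeta-q_1)^2} = \tfrac{(q_2-q_1)^2}{(\zeta-q_1)^2(\zeta-q_2)}$ rewrites the Whitney remainder as
\begin{equation*}
  f(q_2)-f(q_1)-(q_2-q_1)\varphi(q_1) = -\sum_{\ell\in L(V)} \frac{(q_2-q_1)^2}{2\pi\I}\int_{\partial U_\ell} \frac{f(\zeta)}{(\zeta-q_1)^2(\zeta-q_2)}\,\dd\zeta,
\end{equation*}
and Young's inequality $|\zeta-q_1|^{-2}|\zeta-q_2|^{-1}\le \tfrac{2}{3}|\zeta-q_1|^{-3}+\tfrac{1}{3}|\zeta-q_2|^{-3}$ combined with~(\ref{ineqseriescc}) applied at both $q_1$ and $q_2$ bounds the norm of the remainder by $\const\cdot|q_2-q_1|^2$, which is $o(|q_2-q_1|)$. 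A parallel computation based on $(\zeta-q')^{-2}-(\zeta-q)^{-2}=(q'-q)(2\zeta-q-q')\bigl/\bigl((\zeta-q)^2(\zeta-q')^2\bigr)$ yields local Lipschitz continuity of $\varphi$ (with the same Young trick), hence the required continuity of $f\pp1$ on $\ti K$, completing the proof that $f \in \Chol(\ti K, B)$.

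The main obstacle I anticipate is of book-keeping type: justifying absolute convergence of the infinite sums $\sum_{\ell\in L(V)}$ requires combining~(\ref{ineqseriescc}) (via H\"older) with some control on the total lengths $|\partial U_\ell|$, for which the analytic-curve assumption on the inner boundary and rectifiability of the outer ones should suffice; in addition, the orientations and the role of the unbounded component of $\ovint{K}{1.45}$ containing $\infty$ must be tracked with care. Conceptually, the chord-arc hypothesis is what drives the proof, since without it one could not reduce all Whitney-type estimates between neighbouring points of $\ti K$ to a single Cauchy representation.
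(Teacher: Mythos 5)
Your Whitney-remainder computation is sound and gives a reasonable alternative to the paper's argument: the paper uses the integral form of Taylor's remainder along the chord-arc path ($\cR(q,q',\ze) = \int_\ga 2(q'-q_1)(\ze-q_1)^{-3}\,\dd q_1$) followed by Fubini, whereas your algebraic identity plus Young's inequality achieves the same $O(|q_2-q_1|^2)$ bound using the hypothesis~(\ref{ineqseriescc}) only at the two endpoints rather than along the whole path. Either version works once the Cauchy representation is in hand.

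The gap is at the start. Your deduction that (\ref{ineqseriescc}) forces $\ti K\subset\ovint{K}{1.45}$ is wrong: the hypothesis only excludes $q$ from each $\pa U_\ell$, but $K$ also has an \emph{inner boundary} $\pa K\setminus\bigcup\pa U_\ell$ (points of $\pa K$ that are accumulation points of the $U_\ell$'s without lying on any single $\pa U_\ell$), and points of $\ti K$ may well sit there. In the intended applications this is exactly what happens --- $\ti K$ meets the unit circle along a Cantor set which is the inner boundary --- so the case your argument dismisses is the one the lemma is designed to treat. Consequently ``Cauchy's theorem inside $V(q)$'' is not available: $V(q)$ is undefined when $q$ is on the inner boundary, and even for interior $q$ the topological boundary $\pa V(q)$ is generally \emph{not} a union of $\pa U_\ell$'s (it also contains inner-boundary points), so the contour integral you write down is not what Cauchy's theorem directly provides. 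Establishing the representation $f(q)=\frac{1}{2\pi\I}\sum_\ell\int_{\pa U_\ell}\frac{f(\ze)}{\ze-q}\,\dd\ze$ for \emph{all} $q\in\ti K$ is precisely the hard analytic input, and the paper obtains it from Melnikov's rational-approximation theorem: the hypothesis that the inner boundary lies in an analytic curve guarantees that any $f\in\gO(K,B)$ is a uniform limit of rationals with poles off $K$, to which Cauchy's theorem applies term-by-term, and the representation survives passage to the limit. Without this (or an equivalent Vitushkin-type approximation argument), your proof has no way to get started at the inner-boundary points, and your ``book-keeping'' worry about the absolute convergence of $\sum_\ell$ is in fact masking the essential difficulty rather than a peripheral one.
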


\begin{proof}
By Melnikov's theorem \cite[p.112]{Za}, the assumption implies that any
$f\in\gO(K,B)$ is the uniform limit of a sequence of rational functions $r_k$
with poles off~$K$.
Given $q\in\ti K$, the function $\ze\mapsto f(\ze)$ and $\ze\mapsto |\ze-q|$ are
bounded on~$K$, thus there exists $\ka>0$ such that $|\ze-q|\ii\le
\ka|\ze-q|^{-3}$ and $|\ze-q|^{-2}\le \ka|\ze-q|^{-3}$ and we can set
$$
f\pp0(q) = \frac{1}{2\pi\I} \sum_\ell \int_{\pa U_\ell} \frac{f(\ze)}{\ze-q}\,\dd\ze,
\quad
f\pp1(q) = \frac{1}{2\pi\I} \sum_\ell \int_{\pa U_\ell} \frac{f(\ze)}{(\ze-q)^2}\,\dd\ze.
$$
With a suitable orientation of the $\pa U_\ell$'s, applying the Cauchy theorem
to the rational functions~$r_k$ and passing to the limit, we see that
$f\pp0(q)=f(q)$.

Take now $q,q'\in \ti K$ close enough one to the other, with a rectifiable path~$\ga$ joining
them inside~$\ti K$. It will be sufficient to show that
$A := \norm{f(q')-f(q)-(q'-q)f\pp1(q)}$ is $O\bigl(\bigl(\length(\ga)\bigr)^2\bigr)$.
We have 
$$
f(q')-f(q)-(q'-q)f\pp1(q) = 
\frac{1}{2\pi\I} \sum_\ell \int_{\pa U_\ell} f(\ze) \cR(q,q',\ze)\,\dd\ze,
$$
where 
\[
\cR(q,q',\ze) = \frac{1}{\ze-q'} - \frac{1}{\ze-q} -
\frac{q'-q}{(\ze-q)^2}
\]
can also be written $\int_\ga
\frac{2(q'-q_1)}{(\ze-q_1)^3}\,\dd q_1$ (Taylor formula with integral remainder).
By Fubini's theorem, we get
$$A \le \frac{C}{\pi}\max|f| \int_\ga |q'-q_1|\,|\dd q_1|$$
and the conclusion follows.
\end{proof}

A similar idea is used in Section~2.5 of \cite{MS1} (see also Remark~2.1 there),
where specific compact sets $K^*\subset K$ are defined and satisfy conditions
stronger than~(\ref{ineqseriescc}) which imply $\Chol(K,B) \subset
\Cinfhol(K^*,B)$
(the inner boundaries of the compact sets used in the application to small divisor
problems are contained in the unit circle).


\section{A quasianalyticity result for $\gO(K,B)$ and monogenic functions}
\label{secStatRes}


The compact sets we are interested in are defined as follows:
\begin{Def}
We say that $\bigl(\Ga\IN,\Ga\EX\bigr)$ is a {\em nested pair} if~$\Ga\IN$ and $\Ga\EX$ are
Jordan curves contained in~$\C$ such that
\begin{itemize}
\item $\Ga\IN$ is contained in the closure of the connected component
of~$\PP\setminus\Ga\EX$ which does not contain~$\infty$,
\item $\Ga\IN$ and~$\Ga\EX$ are rectifiable and $\gH^1(\Ga\IN\cap\Ga\EX)>0$.
\end{itemize}
We then define~$K\EX$ to be the closure of the connected component of $\PP\setminus\Ga\EX$
which contains~$\infty$ (delimited by the ``external''
curve~$\Ga\EX$),
$K\IN$ to be the closure of the connected component of
$\PP\setminus\Ga\IN$ which does not contain~$\infty$ (delimited by the ``internal''
curve~$\Ga\IN$),
and 
\begin{equation}  \label{eqdefK}
K\bigl(\Ga\IN,\Ga\EX\bigr) = K\IN \cup K\EX,
\quad \cI\bigl(\Ga\IN,\Ga\EX\bigr) = \Ga\IN\cap\Ga\EX.
\end{equation}
\end{Def}

In practice, in the applications considered in Part~B, we shall have furthermore
\begin{equation}  \label{eqGaINGaEX}
\Ga\IN \subset \ov\D\setminus\{0\},
\quad \Ga\EX \subset \ov\E\setminus\{\infty\}, 
\quad \Ga\IN \cap \SS = \Ga\EX \cap \SS, 
\end{equation}
where
$$
\D = \{\, q\in\C \mid |q|<1 \,\}, \quad
\E = \{\, q\in\PP \mid |q|>1 \,\}, \quad
\SS = \{\, q\in\C \mid |q|=1 \,\},
$$
and the set $\cI\bigl(\Ga\IN,\Ga\EX\bigr)\subset\SS$ will be defined by an arithmetical
condition which gives it positive Lebesgue measure on the unit circle.
Then $\cI\bigl(\Ga\IN,\Ga\EX\bigr)$ coincides with $K\IN \cap \SS = \Ga\IN \cap
\SS$ and with $K\EX \cap \SS = \Ga\EX \cap \SS$
(see Figure~\ref{figCurves}).


\begin{figure}

\begin{center}

\psfrag{OR}{$0$}

\psfrag{KI}{$K\IN$} \psfrag{KE}{$K\EX$}
\psfrag{GI}{$\Ga\IN$} \psfrag{GE}{$\Ga\EX$}

\epsfig{file=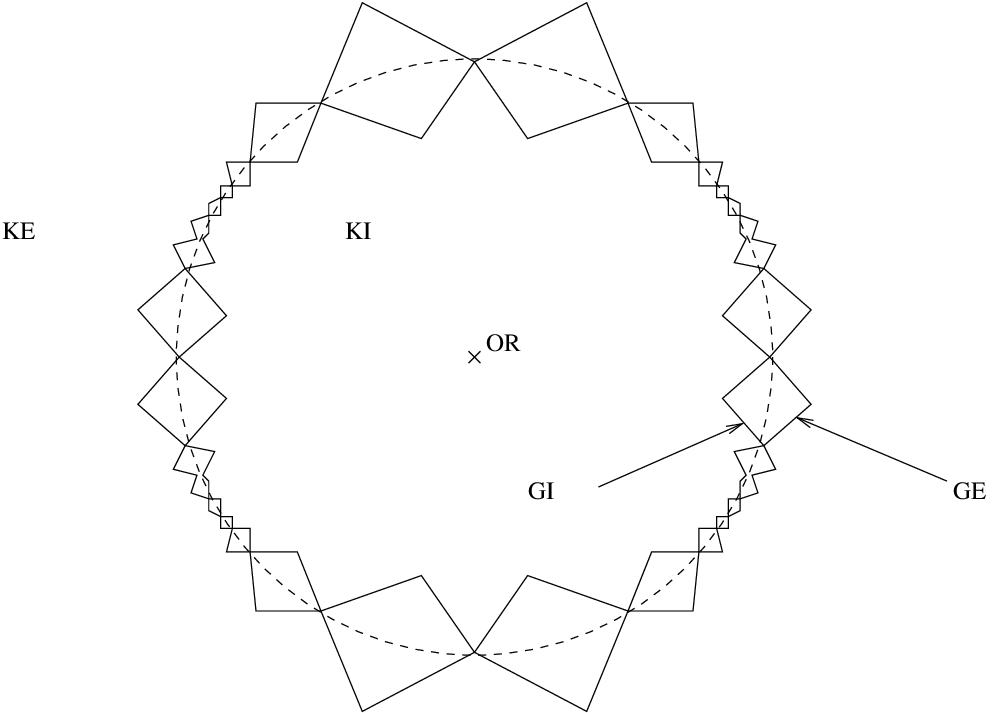,height=2.6in,angle = 0}

\end{center}

\caption{\label{figCurves} $K=K\bigl(\Ga\IN,\Ga\EX\bigr)$ is the union of the
sets $K\IN$ and $K\EX$ delimited by the internal and external curves $\Ga\IN$
and $\Ga\EX$.}

\end{figure}

\begin{thmA}
   Let $\bigl(\Ga\IN,\Ga\EX\bigr)$ be a nested pair and~$B$ a complex Banach
space.
   Let $K=K\bigl(\Ga\IN,\Ga\EX\bigr)$.
  Then $\gO(K,B)$ is $\gH^1$-quasianalytic relatively to~$K$ and it is also
  quasianalytic at every point of~$\ovint{K}{1.45}$.
\end{thmA}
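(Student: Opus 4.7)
The plan is to reduce everything to a classical boundary uniqueness theorem for holomorphic functions in Jordan domains with rectifiable boundary. First, by Hahn-Banach one may assume $B=\C$: if $f\in\gO(K,B)$ vanishes on a candidate uniqueness set (resp.\ has zero asymptotic expansion at $q_0$) and the scalar case has been proved, then $\ph\circ f\equiv 0$ for every $\ph\in B^*$, whence $f\equiv 0$. Next, by the definition of a nested pair, each of $K\IN$ and $K\EX$ is the closure of a Jordan domain in $\PP$ bounded by a rectifiable Jordan curve; their intersection contains $\Ga\IN\cap\Ga\EX$, which has positive $\gH^1$-measure by hypothesis, and this is the bridge through which the vanishing of $f$ will propagate from one side to the other.

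The decisive technical ingredient is the following lemma: \emph{if $\Omega\subset\PP$ is a Jordan domain bounded by a rectifiable Jordan curve $\Ga$ and $g\in\gO(\ov\Omega,\C)$ vanishes on a subset of $\Ga$ of positive $\gH^1$-measure, then $g\equiv 0$}. After a M\"obius transformation we may assume $\Omega\subset\C$ is bounded. A Riemann map $\ph\colon\D\to\Omega$ then extends, by the F.~and~M.~Riesz theorem applied to the rectifiable curve $\Ga$, to a homeomorphism $\ov\D\to\ov\Omega$ whose boundary correspondence is absolutely continuous in both directions. In particular, a subset of $\Ga$ of positive $\gH^1$-measure pulls back to a subset of $\SS$ of positive Lebesgue measure, so $g\circ\ph$ belongs to the disk algebra and vanishes on a subset of $\SS$ of positive Lebesgue measure; the Luzin--Privalov uniqueness theorem (equivalently, the $L^1$-integrability of $\log|g\circ\ph|$ for nonzero disk-algebra functions) then forces $g\circ\ph\equiv 0$.

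Granting the lemma, both assertions of Theorem~A follow quickly. Suppose first that $f\in\gO(K,\C)$ vanishes on $A\subset K$ with $\gH^1(A)>0$; by symmetry assume $\gH^1(A\cap K\IN)>0$. Then either $A\cap\ovci{K\IN}$ has positive $\gH^1$-measure---hence is uncountable, and by a Lindel\"of-type argument has an accumulation point in the open connected set $\ovci{K\IN}$, so the classical identity theorem and continuity of $f$ up to $\Ga\IN$ give $f\equiv 0$ on $K\IN$---or $A\cap\Ga\IN$ has positive $\gH^1$-measure and the lemma applied to $f_{|K\IN}$ yields the same. Either way $f$ vanishes on $\Ga\IN\cap\Ga\EX$, a subset of $\Ga\EX$ of positive $\gH^1$-measure, and the lemma applied once more to $f_{|K\EX}$ gives $f\equiv 0$ on $K\EX$, hence on $K$. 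For quasianalyticity at $q_0\in\ovci{K}$, $f$ is holomorphic in a neighbourhood of $q_0$ and the vanishing asymptotic expansion coincides with its Taylor series there, so $f\equiv 0$ in a full neighbourhood of $q_0$; since $\Ga\IN\cup\Ga\EX$ has zero planar measure, this neighbourhood meets at least one of the simply connected open sets $\ovci{K\IN}$, $\ovci{K\EX}$, so by the identity theorem $f$ vanishes on that component and, by continuity, on its closure, and the bridging via $\Ga\IN\cap\Ga\EX$ extends the vanishing to all of $K$.

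The main obstacle lies entirely in the lemma, and more specifically in the F.~and~M.~Riesz absolute continuity of the conformal boundary correspondence for a rectifiable Jordan curve: without this, a subset of $\Ga$ of positive $\gH^1$-measure could a priori pull back to a Lebesgue-null set on $\SS$, after which no classical uniqueness theorem would apply. A minor technical wrinkle is that $\ovci{K\EX}$ contains $\infty$, which is disposed of by a preliminary M\"obius pre-composition sending $\infty$ to a point of $\ovci{K\IN}$; this transformation preserves both the Jordan character and the rectifiability of the boundary.
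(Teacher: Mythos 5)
Your argument is essentially the same as the paper's: reduction to $B=\C$ by Hahn--Banach, Carath\'eodory extension of a Riemann map to a boundary homeomorphism, the F.~and~M.~Riesz theorem to transfer positive $\gH^1$-measure on the rectifiable Jordan boundary to positive Lebesgue measure on~$\SS$, and the Luzin--Privalov (equivalently $H^1$) uniqueness theorem in the disk, with the set $\Ga\IN\cap\Ga\EX$ serving as the bridge from $K\IN$ to $K\EX$. The only cosmetic difference is that you bundle Carath\'eodory's homeomorphic extension and the F.~and~M.~Riesz absolute-continuity statement into a single sentence, whereas the paper keeps them distinct; the underlying chain of reasoning is identical.
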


Being a smaller space, $\Chol(K,B)$ inherits these quasianalyticity properties.
Observe that, by construction, $\ovint{K}{1.45}$ has two connected components,
$\ovint{K}{1.45}\str{1.55}\IN$ and $\ovint{K}{1.45}\str{1.55}\EX$ (respectively
contained in~$\D$ and~$\E$ when~(\ref{eqGaINGaEX}) is fulfilled); 
our functions thus enjoy the aforementioned coherence property, while examples
in \cite{MS1} show that the unit circle may be a barrier for the ordinary
analytic continuation.

\begin{corA}
  Let $(B_j)_{j\in\N}$ denote a monotonic non-decreasing sequence of complex
  Banach spaces with continuous injections.
  Assume that $\bigl( \Ga\IN_j,\Ga\EX_j \bigr)_{j\in\N}$ is a sequence of nested
  pairs
  such that the sequence of compact
  sets defined by $K_j = K\bigl( \Ga\IN_j, \Ga\EX_j \bigr)$ is
  monotonic non-decreasing.

  Then the space of monogenic functions $\gM\bigl( (K_j), (B_j) \bigr)$ is
  $\gH^1$-quasianalytic relatively to $\bigcup K_j$ and
  it is also quasianalytic at every point of $\bigcup \ovint{K}{1.45}_j$.
\end{corA}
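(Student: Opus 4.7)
The plan is to reduce both assertions to Theorem~A applied to the individual compact sets~$K_j$, and then to propagate the resulting vanishing to all of $\gF = \bigcup_j K_j$ by means of the nesting $K_j\subset K_{j+1}$. In each case the aim is to exhibit an index~$j_0$ for which $f_{|K_{j_0}}$ meets the hypothesis of Theorem~A; since $\Chol(K_{j_0},B_{j_0}) \subset \gO(K_{j_0},B_{j_0})$, that theorem will then force $f_{|K_{j_0}}\equiv 0$.

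For the $\gH^1$-quasianalyticity part, suppose $f\in\gM\bigl((K_j),(B_j)\bigr)$ vanishes on a subset $\ga\subset\bigcup_j K_j$ with $\gH^1(\ga)>0$. Countable subadditivity of~$\gH^1$ applied to $\ga=\bigcup_j(\ga\cap K_j)$ yields some $j_0$ with $\gH^1(\ga\cap K_{j_0})>0$, whence the $\gH^1$ part of Theorem~A gives $f_{|K_{j_0}}\equiv 0$. For quasianalyticity at a point $q_0\in\bigcup_j\ovint{K}{1.45}_j$, first observe that ``zero asymptotic expansion at~$q_0$'' is meaningful on~$\gM$: at any~$j$ with $q_0\in\ovint{K}{1.45}_j$, the restriction $f_{|K_j}$ is holomorphic near~$q_0$ in the usual sense and has a Taylor expansion in~$B_j$, while the continuous injections $B_j\hookrightarrow B_{j+1}$ make these expansions mutually compatible. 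Picking any such~$j_0$, the pointwise part of Theorem~A applied to $f_{|K_{j_0}}\in\gO(K_{j_0},B_{j_0})$ again produces $f_{|K_{j_0}}\equiv 0$.

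The final step is propagation. Note that $\gH^1(K_{j_0})>0$, for instance because the rectifiable Jordan curve $\Ga\IN_{j_0}\subset K_{j_0}$ has positive $\gH^1$-measure. For every $k\geq j_0$, the restriction $f_{|K_k}\in\gO(K_k,B_k)$ vanishes on $K_{j_0}\subset K_k$, so the $\gH^1$ part of Theorem~A applied to $(K_k,B_k)$ forces $f_{|K_k}\equiv 0$; and for $l<j_0$, $f_{|K_l}$ is merely a restriction of $f_{|K_{j_0}}\equiv 0$. Hence $f\equiv 0$ on~$\gF$. No substantial obstacle arises beyond this bookkeeping; the one delicate point is making sense of ``zero asymptotic expansion'' for elements of the Fr\'echet space~$\gM$, which is addressed as above.
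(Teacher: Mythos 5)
Your proof is correct and follows essentially the same route as the paper: reduce to Theorem~A on a single $K_{j_0}$ via subadditivity of $\gH^1$ (respectively via $q_0\in\ovint{K}{1.45}_{j_0}$), then propagate using the nesting. The only cosmetic difference is in the propagation step: the paper simply notes that $\gH^1(\ga\cap K_j)>0$ for \emph{all} $j$ large enough (since the $K_j$ are nondecreasing) and applies Theorem~A to each such $j$ directly, whereas you first deduce $f_{|K_{j_0}}\equiv 0$ and then reapply Theorem~A to $K_k$, $k\ge j_0$, using the positive $\gH^1$-measure of $K_{j_0}$ itself; both are fine. The paper additionally observes that the argument gives the stronger statement for the larger space $\bigcap_j\gO(K_j,B_j)$, which your argument also establishes implicitly since it only uses $f_{|K_j}\in\gO(K_j,B_j)$.
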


Observe that, under the assumption~(\ref{eqintCclosC}) (which will hold in the
applications of Part~B), the functions
of $\gM\bigl( (K_j), (B_j) \bigr)$ are holomorphic both in~$\D$ and
in~$\E$. They enjoy the aforementioned coherence property: pseudocontinuation is
possible through the unit circle.

\begin{rem}	\label{remNonQA}
According to \cite{MS1}, for the kind of compact sets
$K=K\bigl(\Ga\IN,\Ga\EX\bigr)$ which appear in the small divisor problems of Part~B, it is
possible to reinforce the arithmetical condition which defines $\cI = K\cap\SS$
so as to define $\cI^* = K^*\cap\SS \subset \cI$, where $K^* = K\bigl( \Ga\IN_*, \Ga\EX_*
\bigr) \subset K$ with a new nested pair $(\Ga\IN_*,\Ga\EX_*)$,
in such a way that the functions of $\Chol(K,B)$ admit asymptotic expansions of
Gevrey type at the points of~$\cI^*$.
Besides, $\Chol(K,B) \subset \Cinfhol(K^*,B)$, as was alluded to at the end of Section~\ref{secMonFcn}.
One can then raise the question of the (Hadamard) quasianalyticity at the points
of~$K^*$, but this is more difficult. 

For instance, the points of~$\SS$ which satisfy the so-called ``constant-type''
Diophantine condition belong to~$\cI^*$, and it is shown in \cite[\S3.3]{MS1} that
$\Chol(K,B)$ is not contained in any of the classical Carleman classes
quasianalytic at these points---it is in fact the solution itself of the linear
small divisor problem that does not belong to these quasianalytic classes.

In \cite{He}, Herman alludes to Borel's studies to determine conditions on the
$K_j$'s which ensure the quasianalyticity of the monogenic functions at least at
certain points, but they are not fulfilled here (see \cite{Wk} and \cite[Remark~2.4]{MS1}).

\end{rem}


\vspace{8pt}

\noindent {\em Proof of Theorem~A.}\;
%
%
Let $f\in \gO(K,B)$; we must infer $f\equiv0$ from its vanishing on
certain subsets of~$K$ or from the vanishing of all its derivatives at a given
point of~$\ovint{K}{1.45}$.
Without loss of generality, we can take $B = \C$ (because the dual
of~$B$ separates the points of~$B$ and 
$\ell\circ f \in \gO(K,\C)$ for every $\ell\in B^*$).
Let $U\IN$ denote the connected component of $\PP\setminus\Ga\IN$
which contains~$0$ (which is nothing but the interior of~$K\IN$) and
$V\EX$ the connected component of $\PP\setminus\Ga\EX$ which
contains~$\infty$ (the interior of~$K\EX$).


\medskip

\noindent{\bf (a)} 
We first prove that $f_{|\cI} \equiv0 \;\Rightarrow\; f\equiv0$.
The key argument comes from Koosis's proof of Privalov's uniqueness
theorem \cite{Ko}.

By a theorem of Carath\'eodory, any conformal representation $\ph\IN
\colon \D \to U\IN$ extends to a homeomorphism $\Phi \colon \ov\D \to
K\IN$.
Assuming $f_{|\cI} \equiv0$, we thus have a function $F\IN =
f\circ\Phi \in \gO(\ov\D,\C)$ which vanishes identically on $\cJ =
\Phi\ii(\cI) \subset \SS$.
According to a theorem of F.\ and M.~Riesz, the image by~$\Phi$ of a
subset of zero Lebesgue measure of~$\SS$ has zero arc-length
on~$\Ga\IN$ (see \cite[p.54]{Ko});
now $\cI \subset \Ga\IN$ and, by virtue of~(\ref{eqarcHausd}),
$\length_{\Ga\IN}(\cI) = \gH^1(\cI)> 0$, hence the
Lebesgue measure of~$\cJ$ is positive.
It follows, by a uniqueness theorem for $H^1$ functions (see
\cite[p.57]{Ko}), that $F\IN\equiv0$.

Hence $f$ vanishes identically on~$K\IN$. A similar argument with a
conformal mapping $\ph\EX \colon \D \to V\EX$ yields $f\equiv0$
on~$K\EX$.


\medskip

\noindent{\bf (b)} 
Suppose now that all the derivatives of~$f$ vanish at a point $q_* \in
\ovint{K}{1.45}$. The principle of analytic continuation yields $f\equiv0$ on
the connected component of~$q_*$ in~$\ovint{K}{1.45}$, which is either~$U\IN$
or~$V\EX$, and then, by continuity, $f\equiv0$ on~$K\IN$ or~$K\EX$ accordingly.
In particular, $f\equiv0$ on~$\cI$ and, by~(a), we
get $f\equiv0$ on the whole of~$K$.


\medskip

\noindent{\bf (c)} 
Suppose finally that $\ga$ is a subset of~$K$ with $\gH^1(\ga)>0$.
By subadditivity, at least one of the sets $\ga\cap \ovint{K}{1.45}$,
$\ga\cap \Ga\IN$ or $\ga\cap \Ga\EX$ must have positive
$\gH^1$-measure.

\medskip

\noindent -- If the first set has positive $\gH^1$-measure, then it
has an accumulation point and we get $f\equiv0$ by virtue
of~(b)
(cf.~(\ref{eqdefAE}) and the observation in the paragraph
following it).

\medskip

\noindent -- If one of the last two ones, say $\ga\cap \Ga\IN$, has
positive $\gH^1$-measure, then (\ref{eqarcHausd}) implies that
$\length_{\Ga\IN}(\ga\cap\Ga\IN)>0$ and we can argue as
in~(a): 
we take any conformal representation $\ph\IN\colon\D\to U\IN$, extend
it to a homeomorphism $\Phi\colon\ov\D\to K\IN$; we have $F\IN =
f\circ\Phi$ holomorphic in~$\D$, continuous in~$\ov\D$, vanishing on
$\Phi\ii(\ga\cap\Ga\IN) \subset \SS$ which has positive Lebesgue
measure (still by F.\ and M.~Riesz's theorem); thus~$F\IN$ vanishes
identically and so does $f_{|K\IN}$.
In particular $f\equiv0$ on~$\cI$ and, by the result obtained
in~(a), $f\equiv0$ on the whole of~$K$.
\qed


\vspace{8pt}

\noindent {\em Proof of Corollary~A.}\; 
%
%
%
We show in fact slightly more: the space $\bigcap \gO(K_j,B_j)$, which is
usually larger than $\gM\bigl( (K_j), (B_j) \bigr)$, is itself
$\gH^1$-quasianalytic relatively to $\gF=\bigcup K_j$ and quasianalytic at the
points of 
\[
\gF' = \bigcup
\ovint{K}{1.45}_{j} \subset \ovint{\gF}{1.45}.
\]

Let $f \in \bigcap \gO(K_j,B_j)$ (this function is thus defined in~$\gF$),
let $\ga \subset \gF$ with $\gH^1(\ga)>0$ and let $q_* \in \gF'$.
By subadditivity of~$\gH^1$, we have $\gH^1(\ga\cap K_j)>0$ for $j$ large
enough.
Also, $q_*\in\ovint{K}{1.45}_{j}$ for $j$ large enough.

Assuming that $f$ vanishes on~$\ga$, Theorem~A thus yields $f\equiv0$ on~$K_j$
for all $j$ large enough, hence on~$\gF$.
The same is true if we assume that the derivatives of~$f$ vanish at~$q_*$.
\qed


\medskip

\begin{rem} {\em (Relation with Privalov's uniqueness theorem.)}\;
\label{remPriv}
%
%
Privalov's theorem asserts that a function holomorphic in~$\D$
with zero as non-tangential limit at every point of a subset of
positive Lebesgue measure of~$\SS$ must vanish identically.

Assume that the hypotheses of Corollary~A are satisfied and let $\gF=\bigcup
K_j$. Assume moreover that~(\ref{eqintCclosC}) holds and that,
for any $\la = \e{2\pi\I x} \in \gF\cap\SS$ and $c,d>0$
such that $d<1$, there exists $j\in\N$ such that the region
\[ \{\, q = r\,\e{2\pi\I\th} \mid c |\th-x|\le |r-1| \le d \,\} \]
is contained in~$K_j$.

These extra assumptions will be met in the small divisor problems of
Part~B. They imply that a function $f\in\gM\bigl( (K_j), (B_j)
\bigr)$ is holomorphic in $\D\cup\E$ and that, at each point $\la\in \gF\cap\SS$,
it admits~$f(\la)$ as non-tangential limit.

In this case, if $f$ vanishes on a set $\ga\subset \gF\cap\SS$ of positive
Lebesgue measure, one can deduce $f\equiv0$ on~$\D$ directly from Privalov's
theorem, and also $f\equiv0$ on~$\E$ by the same theorem (using inversion),
hence $f$ vanishes on the whole of~$\gF$ by continuity.

\end{rem}


\bigskip

\bigskip


\begin{center}

{\Large\bf Part B: Applications to linear and non-linear small divisor problems}

\end{center}



\section{Introduction to small divisor problems}	\label{secIntroSDpb}
%
%

We now fix the notations for three small divisor problems, to which the results of
Part~A will be applied in the subsequent sections.
They are, by order of increasing complexity, Problem~\tn{L},
Problem~\tn{S} and Problem~\tn{C}.
We begin by presenting the second one, which is the so-called Siegel problem:
%
%


\begin{pbS}
Let $G(z) = z + g(z)$ with $g(z)= \sum_{k\ge2} g_k z^k\in z^2\C\{z\}$. 
Study the solution 
$h(z) = z + \sum_{k\ge2} h_k z^k$ of the conjugacy equation
\begin{equation}	\label{eqConj}
h(qz) = q G\bigl( h(z) \bigr)
\end{equation}
as a function of the parameter $q\in\C$.
\end{pbS}

Equation~(\ref{eqConj}) describes indeed the conjugacy between the germ 
$z\mapsto 
q\cdot\bigl( z + g(z) \bigr)$ and its linear part $z\mapsto qz$;
the parameter~$q$ is called the multiplier.
It is well known that, when $q$ is not a root of unity, (\ref{eqConj}) has a unique
formal solution~$h$ tangent to the identity.
The power series~$h(z)$ is always convergent when $q\in\C^*\setminus\SS$,
whereas an arithmetical condition is needed when $q=\ee^{2\pi\I \al}\in\SS$: the
so-called Bruno condition \cite{Bruno,Yoccoz}, which reads
\begin{equation}	\label{eqBrunoClassic}
\sum_{k=0}^\infty \frac{\log m_{k+1}(\al)}{m_k(\al)} < \infty,
\end{equation}
where $\bigl(m_k(\al)\bigr)_{k\ge0}$ denotes the sequence of the denominators of
the convergents of the irrational real number~$\al$ (see Appendix~\ref{appCF}).
The set 
\begin{equation}	\label{eqBrunoNumb}
\cI^{\tn{S}} = \{\, q=\ee^{2\pi\I \al} \mid
\al\in\R\setminus\Q \;\text{satisfies (\ref{eqBrunoClassic})} \,\}
\end{equation}
has full measure in~$\SS$.

Since the solution of~(\ref{eqConj}) depends on~$q$, we shall denote it by
$h(q,z)$ instead of~$h(z)$.
Its coefficients are uniquely determined by induction and are rational functions
of~$q$: with the convention $h_1=1$, the recurrence formulas are
\begin{equation}\label{eq:recurrence}
h_k = \frac{1}{q^{k-1}-1} \, \sum_{j=2}^{k} \, g_j
\sum_{\substack{k_1,\ldots,k_j\ge1 \\ k_1+\cdots+k_j=k}}
h_{k_1}\cdots h_{k_j}, \qquad k\ge2.
\end{equation}
It is easy to see that $q\mapsto h(q,\,.\,)$ is analytic at each point
$q_0\notin\SS$ (with values in a Banach space of holomorphic functions of~$z$
which depends on~$q_0$), 
including the extreme cases $q_0 = 0$ and $q_0 = \infty$ 
(for which $h_{|q=0}$ is the functional inverse of~$G$ and $h_{|q=\infty}$ is
reduced to the identity),
\ie\ it is natural to let~$q$ vary in~$\PP$ in Problem~\tn{S}.

From the point of view of analyticity \wrt~$q$, we thus get two distinct
holomorphic functions $h_{|\D}$ and~$h_{|\E}$, but analytic continuation
through~$\SS$ is impossible, at least in the classical sense, because of the small divisors
$q^{k-1}-1$ (each root of unity acts as a resonance, being a pole for infinitely
many $h_k$'s). 
Still, whenever (\ref{eqBrunoClassic}) is satisfied, $h(\ee^{2\pi\I \al},\,.\,)$
is the limit of~$h(q,\,.\,)$ as $q\to \ee^{2\pi\I \al}$ non-tangentially (cf.\
Section~\ref{remPriv}), as is mentioned in \cite[\S2.1]{BMS}.
The point here is to go farther than this continuity property by using the
results of Part~A.

\medskip


Linearizing the conjugacy equation~(\ref{eqConj}) written as
$h(qz) - qh (z) = q g\bigl(h(z)\bigr)$
leads to

\begin{pbL}
Let $g(z)= \sum_{k\ge2} g_k z^k\in z^2\C\{z\}$. 
Study the solution 
\begin{equation}	\label{eqSolCoho}
h(z) = h_{g}(q,z) = z + \sum_{k\ge 2}g_{k}\frac{z^k}{q^{k-1}-1}
\end{equation}
of the ``cohomological equation''
\begin{equation}  \label{eq:cohomological} 
h(qz) - q h(z) = q g(z)
\end{equation}
as a function of the parameter $q\in\PP$.
\end{pbL}

The series~(\ref{eqSolCoho}) is uniformly convergent in each compact subset of
$(\PP\setminus\SS)\times\D_{R}$, where 
$$\D_R = \ao z\in\C\mid |z|<R \af$$ is the disk of
convergence of~$g(z)$.
Thus, also in this case we get two holomorphic functions, one for $q\in\D$ and
one for $q\in\E$.
Also in this case there are non-tangential limits at certain points of the unit
circle; this time, the optimal condition for the convergence of the power series
in~$z$ when $q=\ee^{2\pi\I \al}\in\SS$ is simply:
\begin{equation}	\label{eqLinOpt}
\sup \left\{\frac{\log m_{k+1}(\al)}{m_k(\al)},\; {k\ge0} \right\} < \infty
%
\end{equation}
(see \cite[\S A.2]{MS1}); in a way similar to~(\ref{eqBrunoNumb}), this defines 
a full-measure subset~$\cI^{\tn{L}}$ of~$\SS$, which is a countable union
of nowhere dense closed sets, while its complement in~$\SS$ is a dense
$G_\de$-set with zero $s$-dimensional Hausdorff measure for all $s>0$.

\medskip


The last problem we consider is again non-linear; it can be viewed as a
complexification of the problem of local conjugacy of a circle maps:

\begin{pbC}
Let $G_{\al,\eps}(\th) = \th + \al + \eps g(\th)$ with
$g(\th) = \sum_{k\in\Z^*} g_k \,\ee^{2\pi\I k\th}$ holomorphic in the annulus
$S_R = \ao \th\in\C/\Z \mid |\IM\th|<R \af$.
Study the solution $(\be,h)$, where $\be\in\C$ and $h:\th\mapsto\th+u(\th)$ with
$u$ a $1$-periodic holomorphic function of zero mean-value, of the conjugacy equation
\begin{equation}	\label{eqComplexCircle}
G_{\al,\eps}\bigl(h(\th)\bigr) - \be = h(\th+\al)
\end{equation}
as a function of the parameters $q=\ee^{2\pi\I\al}\in\PP$ and~$\eps\in\C$ for small~$|\eps|$.
\end{pbC}

Equation~(\ref{eqComplexCircle}) describes the conjugacy between the map
$G_{\al,\eps}-\be=G_{\al-\be,\eps}$ and the rigid rotation $G_{\al,0}:\th\mapsto\th+\al$.
Observe that the correction~$\be$ is needed and it is impossible to impose a
priori its value since the ``rotation number'' $\rho(G_{\al,\eps})$ need not
coincide with~$\al$ for nonzero~$\eps$; in fact, $\be$ is implicitly {\em
determined} (modulo~$1$) by the equation
\begin{equation}	\label{eqrho}
\rho(G_{\al-\be,\eps}) = \al
\end{equation}
as a function of~$\al$ and~$\eps$ which is $1$-periodic in~$\al$.
Of course, to speak of complex rotation number, we need a generalization \wrt\
the classical theory of circle diffeomorphisms (in which $g(\th)$ is assumed to
be real for real values of~$\th$ and only real values of~$\al$ and~$\eps$ are
considered)---see \cite{Ri} for a geometric insight on this generalization.

Instead of solving first equation~(\ref{eqrho}) and then the conjugacy
equation~(\ref{eqComplexCircle}), it is possible to obtain directly the pair
$(\be,h)$ by rewriting the conjugacy equation as 
$h(\th+\al)-h(\th)-\al+\be = \eps g\bigl(h(\th)\bigr)$
and defining the operator 
\begin{equation}	\label{eqdefEq}
E_q\,:\; v(\th) = \sum_{k\in\Z} v_k\,\ee^{2\pi\I k\th} \;\mapsto\;
E_q v(\th) = \sum_{k\in\Z^*} \frac{v_k}{q^k-1}\ee^{2\pi\I k\th}
\end{equation}
for $q=\ee^{2\pi\I\al}$.
Indeed, denoting the mean-value of a $1$-periodic function by
$\ld\,.\,\rd$, Problem~\tn{C} is then equivalent to
\begin{equation}	\label{eqdefbehv}
\be = \ld v \rd, \quad h(\th) = \th + E_q v(\th),
\end{equation}
with $v$ solution of the fixed-point equation
\begin{equation}	\label{eqfixedv}
v(\th) = \eps g\bigl(\th+E_q v(\th)\bigr).
\end{equation}
In Section~\ref{secSDpbNLC} we shall see that, for $q\in\PP\setminus\SS$, $E_q$
defines a bounded operator on the Banach space $\gO(\ov S_{R/2},\C)$ of the
holomorphic functions in the annulus~$S_{R/2}$ which are continuous on its
closure; this will allow us to prove, for $|\eps|$ small enough, the existence
of a unique solution~$(\be,h)$ close to~$(0,\ID)$, which depends holomorphically
on~$q\in\PP\setminus\SS$ and~$\eps$.

On the other hand, \cite{He} deals with the regularity in~$q$ of this solution
in subsets of $\ao q=\ee^{2\pi\I\al} \mid \al\in\C/\Z, \; |\IM\al|\le
R/100 \af$ defined with the help of an arithmetical condition:
a constant $\tau\in(0,1)$ being fixed once for all, these sets are defined in
such a way that their union intersects~$\SS$ along a set~$\cI^{\tn{C}}$
which corresponds to Diophantine numbers~$\al$ of
exponent~$2+\tau$, \ie
\begin{equation}	\label{ineqDioph}
\sup \left\{ m^{-2-\tau}\left|\al-\tfrac{n}{m}\right|\ii,\;{\tfrac{n}{m}\in\Q} \right\} < \infty.
\end{equation}
Complementing Herman's regularity result with the above-mentioned holomorphy
result in $\ao q=\ee^{2\pi\I\al} \in\PP \mid \; |\IM\al| \ge
R/200 \af$, we shall be in a position to apply the results of Part~A.

\begin{rem}
Here, in contrast with Problem~\tn{S}, we do not try to reach the optimal
arithmetical condition, which is known to be the Bruno
condition~(\ref{eqBrunoClassic}) as in the Siegel problem, by Risler's result
based on renormalization---see \cite{Ri}, \cite{Cetraro}.
We content ourselves with~$\cI^\tn{C}$, which has still full measure in~$\SS$.
\end{rem}

\medskip


In the following sections, we shall thus recall the results of \cite{MS1} (with some adaptations),
\cite{CM} and~\cite{He} on the dependence on~$q$ of the solution
of Problems~\tn{L}, \tn{S} and~\tn{C}.
%
%
In each case, a sequence of compact sets $(K_j)_{j\in\N}$ is obtained by removing smaller
and smaller open neighbourhoods of the roots of unity, so that:
\begin{itemize}
\item
The union of the $K_j$'s consists of all the points of~$\PP$ except the roots of
unity and the points $\ee^{2\pi\I \al}\in\SS$ at which (\ref{eqLinOpt}),
resp.~(\ref{eqBrunoClassic}), resp.~(\ref{ineqDioph}), fails; 
in other words, $\bigcup K_j = \D\cup\cI\cup\E$
with $\cI = \cI^{\tn{L}}$, $\cI^{\tn{S}}$ or~$\cI^{\tn{C}}$.
\item
The map $q \mapsto h(q,\,.\,)$ belongs to $\gM\bigl( (K_j),(B_j)\bigr)$
with $B_j = H^\infty(\D_{r_j},\C)$ (the space of bounded holomorphic functions
in~$\D_{r_j}$ with values in~$\C$) for a suitable sequence $(r_j)_{j\in\N}$ in the first two cases,
or $B_j=H^\infty\bigl(\{|\eps|<r_j\},\gO(\ov S_{R/2},\C)\bigr)$ in the last case.
\end{itemize}
We shall then see that the hypotheses of Corollary~A stated in
Section~\ref{secStatRes} are fulfilled, which sheds new light on the
relationship between $h_{|\D}$ and~$h_{|\E}$.


\section{Monogenic regularity and quasianalyticity of the solutions of small divisor problems}
\label{secSDdom}
%

\begin{Def}	\label{defComplexDom}
For any subset~$A$ of the real line which is invariant by integer translations,
we set
\begin{equation}	\label{eqComplexDom}
A^\C =  \bigl\{\, z \in\C \mid\; \exists \al_*\in A
\;\text{ such that }\; |\IM z| \ge |\al_* - \RE z| \,\bigr\}
\end{equation}
and
\begin{equation}	\label{eqDomK}
K = \Exp\big(A^\C\big) \cup \{0,\infty\} \;\subset\; \PP,
\end{equation}
where $\Exp \colon z\in\C \mapsto \ee^{2\pi\I z} \in\C^*\subset\PP$.
The set~$K$ is called the {\em complex multiplier domain} associated with~$A$.
Observe that $K\cap\SS = \Exp(A)$.
\end{Def}

We shall use the following domains:
\begin{enumerate}[{\bf (\ref{secSDdom}.1)}]
\item for any $M>\log3$, 
\begin{equation}	\label{eqRealLin}
A_M^{\tn{L}} = \Bigl\{\, \al \in\R\setminus\Q \mid\; \forall k\in\N, \;
\frac{\log m_{k+1}(\al)}{m_{k}(\al)} \le M \,\Bigr\}
\end{equation}
and the corresponding complex multiplier domain $K_M^{\tn{L}}$;
\item
for any $M>0$, 
\begin{equation}	\label{eqRealNLin}
A_M^{\tn{S}} = \{\, \al \in\R\setminus\Q \mid \gB(\al) \le M \,\}
\end{equation}
and the corresponding complex multiplier domain $K_M^{\tn{S}}$,
where 
\begin{equation}	\label{Brunoseries}
\gB(\al) = \sum_{k\ge0} \frac{\log a_{k+1}(\al)}{m_k(\al)} \in [0,+\infty],
\qquad \al\in\R\setminus\Q,
\end{equation}
$\bigl(a_k(\al)\bigr)_{k\ge0}$ denoting the sequence of partial quotients
of~$\al$ (see Appendix~\ref{appCF});
\item for any $M>2\ze(1+\tau)$ (Riemann's zeta function) with a given
$\tau\in(0,1)$,
\begin{equation}	\label{eqRealNLinC}
A_M^{\tn{C}} = \bigl\{\, \al \in\R\setminus\Q \mid 
\forall \tfrac{n}{m}\in\Q,\;
m^{-2-\tau}\left|\al-\tfrac{n}{m}\right|\ii \le M \,\bigr\}
\end{equation}
and the corresponding complex multiplier domain $K_M^{\tn{C}}$.
\end{enumerate}

\begin{rem}
The function~$\gB$ is closely related to the classical Bruno series
$$\hat\gB(\al) = \sum_{k\ge0} \frac{\log m_{k+1}(\al)}{m_k(\al)}$$ which is involved in
the definition~(\ref{eqBrunoNumb}) of the optimal set~$\cI^{\tn{S}}$;
there exists indeed a constant~$C$ such that
$0\le \hat\gB(\al)-\gB(\al) \le C$ for all $\al\in\R\setminus\Q$ (see \cite{CM}).
As a result, for any sequence $M_j \uparrow \infty$, 
\begin{equation}	\label{eqAMjOpti}
\SS\cap \bigcup_{j\in\N} K_{M_j}^{\tn{S}} =
\{\, \ee^{2\pi\I\al} \mid \al\in\R\setminus\Q, \; \hat\gB(\al) < \infty \,\}
= \cI^{\tn{S}}.
\end{equation}
In fact,
$\bigcup_{j\in\N} K_{M_j}^{\tn{a}} = \D\cup\cI^{\tn{a}}\cup\E$
for \tn{a} $=$ \tn{L}, \tn{S} or~\tn{C}.
\end{rem}

\begin{thmB}
Consider Problem~\tn{L} or~\tn{S} with a given $g\in
H^\infty(\D_R)$, $R>0$.
Then the solution $h:\, q\mapsto h(q,\,.\,)$ belongs to
the spaces $\Chol\bigl(K_M^{\tn{L}},B_M)$ or
$\Chol\bigl(K_M^{\tn{S}},B_M)$, 
with $B_M = H^\infty(\D_{R\,\ee^{-4M}})$;
for any sequence $M_j\uparrow\infty$, the solution thus defines a monogenic
function of
$\gM\bigl(K_{M_j}^{\tn{L}},B_{M_j})$ or
$\gM\bigl(K_{M_j}^{\tn{S}},B_{M_j})$.

As for Problem~\tn{C} with $g\in \gO(\ov S_R,\C)$ of zero mean-value and
$\tau\in(0,1)$ given, for each $M>2\ze(1+\tau)$ there exists $r = r(M)>0$ such
that
\begin{multline}	\label{equniqusol}
q\in K_M^{\tn{C}}, \; |\eps|<r(M)\ens\Rightarrow\ens \\
\exists \;\text{solution}\; (\be,h)
=\bigl(\be(q,\eps),h(q,\eps)\bigr)\in\C\times\gO(\ov S_{R/2},\C).
\end{multline}
Moreover,
the function 
$ q \mapsto \bigl(\be(q,\,.\,),h(q,\,.\,)\bigr) $
belongs to the space $\Chol\bigl(K_M^{\tn{C}},B_M\bigr)$ with 
$B_M = H^\infty\bigl(\{|\eps|<r(M)\},\C\times\gO(\ov S_{R/2},\C)\bigr)$
and thus defines a monogenic function of 
$\gM\bigl( (K_{M_j}^{\tn{C}}), (B_{M_j}) \bigr)$
for any sequence $M_j\uparrow\infty$.

For \tn{a} $=$ \tn{L}, \tn{S} or~\tn{C},
the spaces $\gO\bigl(K_M^{\tn{a}},B_M\bigr)$, which contain the spaces
$\Chol\bigl(K_M^{\tn{a}},B_M\bigr)$, are $\gH^1$-quasianalytic relatively
to~$K_M^{\tn{a}}$ and quasianalytic at the interior points
of~$K_M^{\tn{a}}$,
and the spaces $\gM\bigl( (K_{M_j}^{\tn{a}}), (B_{M_j}) \bigr)$ are  
$\gH^1$-quasianalytic relatively to $\D\cup \cI^{\tn{a}} \cup\E$ and
quasianalytic at the points of $\D\cup\E$.
\end{thmB}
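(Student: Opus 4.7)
The plan is first to verify that each set $K_M^{\tn{a}}$ (for $\tn{a}\in\{\tn{L},\tn{S},\tn{C}\}$) has the structural form required by Theorem~A, namely $K_M^{\tn{a}} = K(\Gamma\IN, \Gamma\EX)$ for a nested pair, and then to establish the membership of~$h$ (resp.\ $(\be,h)$) in $\Chol(K_M^{\tn{a}}, B_M)$ by combining the regularity results of \cite{MS1}, \cite{CM} and \cite{He} with a complementary holomorphy argument near $q=0$ and $q=\infty$ for Problem~\tn{C}.

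For the structural step, one observes that the real set $A_M^{\tn{a}}$ is closed (after taking closure, if necessary, outside of the rationals on which the arithmetic condition is vacuous), invariant under integer translations, and of positive Lebesgue measure (a standard Khintchine-type statement for each of the three arithmetic conditions). Its complex lift~$A_M^{\C}$ defined by~(\ref{eqComplexDom}) consists, in the upper (resp.\ lower) half-plane, of the region lying on one side of the graph of the Lipschitz function $\al\mapsto\dist(\al,A_M^{\tn{a}})$ (resp.\ its opposite), whose boundary in~$\C$ is a rectifiable curve made of pieces of lines of slopes~$\pm1$. Passing through the exponential map $E(z)=\ee^{2\pi\I z}$ and adjoining $\{0,\infty\}$ to close the two components, one obtains two rectifiable Jordan curves $\Gamma\IN$ and $\Gamma\EX$ meeting exactly along $E(A_M^{\tn{a}})\subset\SS$; since $A_M^{\tn{a}}$ has positive Lebesgue measure in~$\R$, $E(A_M^{\tn{a}})$ has positive $\gH^1$-measure in~$\SS$, so $(\Gamma\IN,\Gamma\EX)$ is indeed a nested pair and $K_M^{\tn{a}}=K(\Gamma\IN,\Gamma\EX)$.

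For the regularity step, Problem~\tn{L} is handled by applying the explicit Cauchy-type estimates on the series~(\ref{eqSolCoho}) and on its formal $q$-derivative, as in \cite{MS1}; the arithmetic condition defining $A_M^{\tn{L}}$ yields the required $\Chol$-bounds with radius-loss factor $\ee^{-4M}$. Problem~\tn{S} is handled by the inductive majorant scheme for the recurrence~(\ref{eq:recurrence}) developed in \cite{CM}, yielding the corresponding Whitney-$\gC^1$-holomorphic estimates. For Problem~\tn{C}, one first proves existence and holomorphy on $(\PP\setminus\SS)\times\{|\eps|<r\}$ by a Banach fixed-point argument for~(\ref{eqfixedv}): for $q\in\PP\setminus\SS$ the operator~$E_q$ is bounded on $\gO(\ov S_{R/2},\C)$, with norm controlled by $\dist(q,\SS)^{-1}$, so for $|\eps|$ small enough $v\mapsto\eps g(\,\cdot\,+E_q v)$ is a contraction yielding a unique small solution depending holomorphically on $(q,\eps)$; one then invokes \cite{He} on the part of~$K_M^{\tn{C}}$ lying in a thin strip around~$\SS$, and glues the two descriptions on the overlap region by uniqueness of the fixed point.

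Once $h$ (resp.\ $(\be,h)$) is known to belong to $\Chol(K_M^{\tn{a}},B_M)\subset\gO(K_M^{\tn{a}},B_M)$, the last part of Theorem~B follows at once: Theorem~A of Part~A applied to the nested pair gives the $\gH^1$-quasianalyticity of $\gO(K_M^{\tn{a}},B_M)$ relatively to~$K_M^{\tn{a}}$ and its quasianalyticity at every interior point, and Corollary~A applied to the sequence $(K_{M_j}^{\tn{a}})$ gives the same properties for the monogenic Fr\'echet space, with $\bigcup_j K_{M_j}^{\tn{a}}=\D\cup\cI^{\tn{a}}\cup\E$ established as in~(\ref{eqAMjOpti}). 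The main obstacle is the regularity step for Problem~\tn{C}: while the strip estimates of \cite{He} and the fixed-point estimates away from~$\SS$ are each comparatively routine, controlling the radii~$r(M)$ and verifying that the two descriptions glue into a single Whitney-$\gC^1$-holomorphic function on the whole of~$K_M^{\tn{C}}$---including neighbourhoods of~$0$ and~$\infty$, which is crucial in order to meet the hypothesis~(\ref{eqintCclosC}) needed for Corollary~A---requires careful bookkeeping.
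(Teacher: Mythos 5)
Your plan matches the paper's decomposition almost step for step: verify via Lemma~\ref{lemNest} that each $K_M^{\tn{a}}$ is $K(\Ga\IN,\Ga\EX)$ for a nested pair (which requires $A_M^{\tn{a}}$ closed, $1$-periodic, and of positive Lebesgue measure), cite the prior regularity results of \cite{MS1}, \cite{CM}, \cite{He}, extend Herman's strip result for Problem~\tn{C} via a fixed-point argument in $\gO(\ov S_{R/2},\C)$ with a bounded operator~$E_q$ away from~$\SS$, and then apply Theorem~A and Corollary~A. You also correctly identify the gluing of the strip description with the far-from-$\SS$ description as the technical crux of the Problem~\tn{C} case, and the paper indeed relies on uniqueness of the fixed point there.

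There is, however, one genuine gap: you dismiss the positivity of Lebesgue measure as a ``standard Khintchine-type statement for each of the three arithmetic conditions.'' This is accurate for Problems~\tn{L} and~\tn{C}, where the defining conditions are of the form $|\al-\tfrac{n}{m}|\ge\ga/m^{2+\tau}$ or its exponential analogue, and the measure of the excluded neighbourhoods is summed directly. But for Problem~\tn{S} the set $A_M^{\tn{S}}=\{\gB(\al)\le M\}$ is defined by a bound on the Bruno series $\sum\log a_{k+1}(\al)/m_k(\al)$, which involves \emph{all} partial quotients and is not controlled by a single Diophantine inequality. Khintchine gives full measure to $\{\gB(\al)<\infty\}$, not to the level set $\{\gB(\al)\le M\}$ for a \emph{fixed}~$M$. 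The paper has to isolate this as Lemma~\ref{lemAMR}, whose proof (Appendix~\ref{appPfLem}) is a non-trivial continued-fraction argument: one works inside the intervals of rank~$k$ generated by the initial string $a_1=\cdots=a_k=1$ (so that $m_\ell=F_{\ell+1}$ grows like the golden ratio and the head of the series vanishes), shows that the Diophantine numbers $\DCgt$ therein have $\gB\le M$ for $k$ large, and then proves a quantitative lower bound $\abs{\DCgt\cap I_k}>(1-\ga/\ga^*)\abs{I_k}$ by excluding rationals $n/m$ with $m\ge m_k$ and counting them. Without such an argument, the hypothesis of Lemma~\ref{lemNest} is unverified for Problem~\tn{S} and the nested-pair structure of $K_M^{\tn{S}}$ is left hanging.

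Two smaller remarks: (1) the bound $\norm{E_q}\lesssim\dist(q,\SS)^{-1}$ you claim is not quite what one gets; the paper bounds $\norm{E_q}$ by an explicit constant $\cE=2+(\ee^{2\pi R}-1)^{-1}+\tfrac12\sinh^{-2}(\pi\La)$ on the domain $\{|\IM\al|>\La\}$, obtained by splitting $E_q=-\Pi^++\sum_{k\ge1}\frac{q^k}{1-q^k}(-\Pi_k+\Pi_{-k})$; the $\sinh^{-2}$ blow-up as $\La\to0$ is the correct scale. (2) The phrase ``after taking closure, if necessary'' is unnecessary hedging: all three sets $A_M^{\tn{a}}$ are genuinely closed subsets of $\R\setminus\Q$ whose closure in $\R$ does not add rational points (established in \cite{MS1} for~\tn{L}, in \cite{CM} for~\tn{S}, and by inspection for~\tn{C}), and closedness is needed exactly as stated for Lemma~\ref{lemNest} to produce a Jordan curve rather than a mere rectifiable set.
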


\noindent
The rest of the article is devoted to the proof of Theorem~B.

\medskip

The domains $K_M^{\tn{a}}\subset \PP$ are obtained from the arithmetical
conditions~(\ref{eqRealLin}), (\ref{eqRealNLin}) or~(\ref{eqRealNLinC}). 
The connection with Part~A of the article is provided by the following lemma,
which will make it possible to apply Theorem~A and its corollary to the spaces
$\gO\bigl(K_M^{\tn{a}},B\bigr)$ or
$\gM\bigl( (K_{M_j}^{\tn{a}}), (B_{M_j}) \bigr)$.

\begin{lemma}	\label{lemNest}
Let $A\subset\R$ be invariant by integer translations and $K\subset\PP$ the
corresponding complex multiplier domain as in Definition~\ref{defComplexDom}.
If $A$ is closed and has positive Lebesgue measure, then there exists a nested
pair $\bigl(\Ga\IN,\Ga\EX\bigr)$ such that 
\begin{equation}	\label{eqKI}
K = K\bigl(\Ga\IN,\Ga\EX\bigr), \quad \Exp(A) = \cI\bigl(\Ga\IN,\Ga\EX\bigr).
\end{equation}
\end{lemma}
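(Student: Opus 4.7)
The plan is to manufacture $\Ga\IN$ and $\Ga\EX$ as the images under~$E$ of the graphs of $\pm\de$, where $\de(x):=\dist(x,A)$. Since $A$ is closed and $\Z$-invariant, $\de$ is a non-negative, $1$-periodic, $1$-Lipschitz function on~$\R$ with $\de\ii(0)=A$; in particular $\de$ is bounded. The definition~(\ref{eqComplexDom}) of $A^\C$ simplifies to
\[
A^\C=\{\,x+\I y\in\C \mid |y|\ge\de(x)\,\},
\]
so its upper and lower boundaries are exactly the graphs of $+\de$ and $-\de$.

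I define $\psi\IN(x)=\ee^{-2\pi\de(x)+2\pi\I x}$ and $\psi\EX(x)=\ee^{2\pi\de(x)+2\pi\I x}$ on $\R/\Z$, and set $\Ga\IN=\psi\IN(\R/\Z)$, $\Ga\EX=\psi\EX(\R/\Z)$. Injectivity on the fundamental domain is automatic: for $\psi\IN$, an equality of values forces equality of moduli, hence $\de(x_1)=\de(x_2)$, after which equality of arguments forces $x_1\equiv x_2\pmod 1$; the argument for $\psi\EX$ is identical. Both parameterizations are Lipschitz because $\de$ is $1$-Lipschitz and bounded, so $\Ga\IN$ and~$\Ga\EX$ are rectifiable Jordan curves in~$\C$, with $\Ga\IN\subset\ov\D\setminus\{0\}$ and $\Ga\EX\subset\ov\E\setminus\{\infty\}$.

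A point belongs to $\Ga\IN\cap\Ga\EX$ exactly when $\de(x)=0$, i.e.\ when $x\in A$, so $\Ga\IN\cap\Ga\EX=E(A)$, which via~$E$ corresponds to~$A$ in the statement of the lemma. Since $E$ restricts to a bi-Lipschitz homeomorphism from~$[0,1)$ onto~$\SS$ and $A\cap[0,1)$ has positive Lebesgue measure, $\gH^1(\Ga\IN\cap\Ga\EX)>0$. For the nesting: $\Ga\EX\cap\D=\emptyset$ (since $|\psi\EX(x)|\ge 1$), so $\D$ lies in the bounded component of $\PP\setminus\Ga\EX$, whose closure therefore contains $\ov\D\supset\Ga\IN$. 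Thus $(\Ga\IN,\Ga\EX)$ is a nested pair.

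It remains to identify $K(\Ga\IN,\Ga\EX)$ with~$K$. Using radial segments from~$0$ and the Jordan curve theorem, one shows that the closure of the bounded component of $\PP\setminus\Ga\IN$ is the star-shaped region
\[
K\IN=\{\,r\ee^{2\pi\I x} : x\in\R,\;0\le r\le\ee^{-2\pi\de(x)}\,\},
\]
which under the substitution $r=\ee^{-2\pi y}$ equals $E(A^\C\cap\{\IM z\ge 0\})\cup\{0\}$. An analogous computation (after inversion at~$\infty$) identifies $K\EX$ with $E(A^\C\cap\{\IM z\le 0\})\cup\{\infty\}$. Summing, $K\IN\cup K\EX=E(A^\C)\cup\{0,\infty\}=K$. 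The main obstacle is precisely this last identification: because $\Ga\IN$ and~$\Ga\EX$ touch~$\SS$ along the typically Cantor-like set~$E(A)$ of positive measure, a little care is required with radial paths and the Jordan curve theorem to confirm that the bounded components of $\PP\setminus\Ga\IN$ and $\PP\setminus\Ga\EX$ are indeed the claimed star-shaped regions, rather than something topologically more intricate.
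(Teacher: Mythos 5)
Your proof is correct and takes essentially the same route as the paper: both construct $\Ga\IN$ and $\Ga\EX$ as the images under $E$ of the graphs of $\mp\dist(\,\cdot\,,A)$, use the $1$-Lipschitz property to get rectifiability, and invoke the F.~and M.~Riesz-free identification $\cI(\Ga\IN,\Ga\EX)=E(A)$. The paper's version is terser (it declares the properties of~(\ref{eqKI}) to be ``easily seen'') while you spell out the verification, including the delicate identification of the bounded complementary components; the only slight imprecision is the claim that $E$ is bi-Lipschitz on the whole of $[0,1)$ — it is only locally so — but this does not affect the conclusion $\gH^1(\Ga\IN\cap\Ga\EX)>0$.
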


\noindent {\em Proof of Lemma~\ref{lemNest}.}
Let $\Phi:\, x\in\R \mapsto \dist(x,A)$.
This is a $1$-periodic function, which satisfies (with the notation of
Definition~\ref{defComplexDom})
$$
A^\C = \ao x+\I y \mid x,y\in\R,\; |y|\ge\Phi(x) \af.
$$
(Indeed, if $x+\I y\in A^\C$, then according to~(\ref{eqComplexDom}) there is
$\al^*\in A$ such that $|y| \ge |\al_*-x| \ge \Phi(x)$; conversely, if $|y|\ge
\Phi(x)$, just pick $\al_*\in A$ such that $\Phi(x) = |\al_*-x|$.)

Since $\Phi$ is $1$-Lipschitz, we have two rectifiable Jordan curves
$$
\Ga\IN = \bigl\{ \ee^{-2\pi\Phi(x)}\cdot\ee^{2\pi\I x}, \; x\in\R \bigr\},
\quad
\Ga\EX = \bigl\{ \ee^{2\pi\Phi(x)}\cdot\ee^{2\pi\I x}, \; x\in\R \bigr\},
$$
which are easily seen to satisfy~(\ref{eqKI}).

\qed

Notice that if the connected components of~$\R\setminus A$ are denoted
$(\al_\ell,\al'_\ell)$, then the connected components of~$\C\setminus A^\C$
consist of the open diamonds~$\De_\ell$ with bases $(\al_\ell,\al'_\ell)$:
\begin{equation}	\label{eqdiamonds}
\De_\ell = \bigl\{\, x+\I y \mid 
x \in (\al_\ell,\al'_\ell), \;
|y| < \min(x -  \al_\ell, \al'_\ell - y)
\,\bigr\}
\end{equation}

\begin{rem}
Problem~\tn{C} can be slightly generalized as follows: introducing the notation
\begin{equation}	\label{eqdefSab}
S_{a,b} = \ao \th\in\C/\Z \mid a < \IM\th < b \af
\qquad \text{for $-\infty \le a < b \le +\infty$},
\end{equation}
we can consider any $G_{\al,g} = \ID + \al + g$ with $g$ holomorphic
on~$S_{a,b}$ and small enough, and look for $(\be,h)$ such that
$G_{\al-\be,g}\circ h = h\circ G_{\al,0}$, with $h-\ID$ of zero mean-value and
holomorphic in an annulus $S_{a',b'}$ the closure of which is contained
in~$S_{a,b}$.

Problem~\tn{S} now appears as a particular case, via the map $\th\mapsto E(\th)=\ee^{2\pi\I\th}$: 
for any $g\in z^2\C\{z\}$, let 
\[
\ell_g(z) = \frac{1}{2\pi\I}\log\big(1+z\ii
g(z)\big) \in z\C\{z\}
\]
and $\ti g = \ell_g\circ E$.
Observe that 
\[
\ti g(\th) = \sum_{k\ge1}\ti g_k\, \ee^{2\pi\I k\th}
\]
is holomorphic
and bounded in~$S_{a,+\infty}$ for $a$ large enough (the larger~$a$, the smaller
the maximum of~$|g|$),
and 
$$
E \circ (\ID+\al+\ti g) = \big[q(\ID+g)\big]\circ E,
$$
hence linearizing $q(\ID+g)$ by $h\in z+z^2\C\{z\}$ amounts to conjugating
$\ID+\al+\ti g$ to $\ID+\al$ by 
$\ti h = \ID + \frac{1}{2\pi\I} \big(\log\frac{h}{\ID}\big) \circ E$
holomorphic in~$S_{a',+\infty}$ for a certain~$a'$, 
and $\be=0$ in this case.

\end{rem}


\section{Proof of Theorem~B}
\label{secpfthmB}
%

\subsection{Case of the cohomological equation}
%

Let $M>\log3$.
It is shown in \cite[\S2.3]{MS1} that $[0,1]\cap A_M^\tn{L}$ has Lebesgue
measure $\ge 1-\frac{2}{\ee^M-1} > 0$, is totally disconnected, closed and
perfect;
it follows from~(\ref{ineqCVk}) and~(\ref{ineqLawBA}) that this set consists of
points which are ``far enough from the rationals'', namely
$$
\bigcap_{n/m} \{\, \al\in\R \mid |\al-\tfrac{n}{m}| \ge \tfrac{1}{m\,\ee^{M m}} \,\}
\;\subset\; A_M^\tn{L} \;\subset\;
\bigcap_{n/m} \{\, \al\in\R \,|\; |\al-\tfrac{n}{m}|> \tfrac{1}{2m\,\ee^{M m}} \,\}.
$$

It is proved in \cite[\S2.4]{MS1} that the function $q\mapsto h(q,\,.\,)$
which describes the solution of the cohomological
equation~(\ref{eq:cohomological}) belongs to
$\Chol\bigl( K_M^\tn{L}, H^\infty(\D_r) \bigr)$ as soon as $0 < r < R\,\ee^{-3M}$.
The conclusion then follows easily from Lemma~\ref{lemNest} applied to~$A_M^\tn{L}$.

\subsection{Case of the Siegel problem}
%
%

Although the technical proofs of \cite{CM}
depart a lot from those in \cite{MS1},
the results for the set~$A_M^\tn{S}$ and the regularity of the solution of
Problem~\tn{S} shown there are similar to those for~$A_M^\tn{L}$ and Problem~\tn{L}.
In particular, \cite{CM} shows that $A_M^\tn{S}$ is a closed subset of~$\R$, which is also totally
disconnected and perfect. Moreover, we have 
\begin{lemma}	\label{lemAMR}
For every $M>0$, the set $A_{M}^\tn{S}$ has positive Lebesgue measure in~$\R$.
\end{lemma}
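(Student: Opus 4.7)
The plan is to reduce, by integer-translation invariance of $A_M^{\tn{S}}$, to showing that $A_M^{\tn{S}} \cap [0,1]$ has positive Lebesgue measure, and to split the argument into two steps: first establish the bound for \emph{some} (possibly large) threshold $M_0$, and then bootstrap to arbitrary $M>0$ via a self-similarity property of $\gB$ under the Gauss map $T\alpha = \{1/\alpha\}$.

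For the first step, I would use the Fibonacci lower bound $m_k(\alpha) \ge F_{k+1} \ge c\,\phi^k$ (which follows from $a_k \ge 1$ in the recurrence $m_k = a_k m_{k-1} + m_{k-2}$) to dominate $\gB$ term by term. Writing $\log a_{k+1}(\alpha) = \log a_1(T^k\alpha)$ and using that the Gauss measure $d\mu = (\log 2)^{-1}(1+\alpha)^{-1}d\alpha$ is $T$-invariant and equivalent to Lebesgue on $[0,1]$ with bounded Radon--Nikodym derivative both ways, each integral $\int_0^1 \log a_{k+1}(\alpha)\,d\alpha$ is bounded by a constant $C_0$ independent of $k$ (since $\int \log a_1\,d\mu = \sum_{n\ge1}\log n\cdot\mu\{a_1 = n\} < \infty$). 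Hence $\int_0^1 \gB\,d\alpha < \infty$, so $\gB < \infty$ a.e., and $|\{\gB \le M_0\} \cap [0,1]| > 0$ for some $M_0$.

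For the bootstrap, the key inequality is that on the cylinder $C_n = \{\alpha \in [0,1] : a_1(\alpha) = \cdots = a_n(\alpha) = 1\}$ one has $\gB(\alpha) \le \gB(T^n\alpha)/F_{n+1}$. Indeed, the first $n$ terms of $\gB(\alpha)$ vanish since $\log 1 = 0$; for the remaining terms, the matrix formulation of convergents yields $m_{n+j}(\alpha) = m_n(\alpha)\,m_j(T^n\alpha) + m_{n-1}(\alpha)\,p_j(T^n\alpha) \ge F_{n+1}\,m_j(T^n\alpha)$ on $C_n$, while $a_{n+j+1}(\alpha) = a_{j+1}(T^n\alpha)$, so the tail is bounded by $\gB(T^n\alpha)/F_{n+1}$. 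Given any $M > 0$, choose $n$ with $F_{n+1} \ge M_0/M$; the cylinder $C_n$ has positive Lebesgue measure and $T^n \colon C_n \to [0,1]$ has bounded distortion, so $C_n \cap T^{-n}\{\gB \le M_0\}$ has positive measure, and on this set $\gB \le M_0/F_{n+1} \le M$. The crux of the argument is precisely this bootstrap: Markov's inequality applied to $\int \gB$ only delivers positive measure for $M$ above a fixed threshold, so one genuinely needs the $T$-self-similar bound, which exploits the fact that partial quotients equal to $1$ contribute nothing to $\gB$.
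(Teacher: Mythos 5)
Your proof is correct, and the central device --- exploiting that partial quotients equal to $1$ contribute nothing to $\gB$, so that restricting to the cylinder $C_n$ (which is exactly the interval $I_n$ of the paper) scales the Bruno function down by a factor $F_{n+1}$ --- is the same structural idea as in the paper's proof. Where you diverge is in how you obtain positive measure. The paper avoids the Gauss measure and ergodic considerations entirely: it works with the Diophantine set $\DCgt$, on which $a_{\ell+1} < m_\ell^\tau/\ga$ holds for all $\ell$, so that $\gB$ restricted to $\DCgt\cap I_k$ is bounded by an explicit geometric tail which can be pushed below $M$ by taking $k$ large; it then proves directly, by a bare-hands covering argument (counting rationals $n/m$ whose exclusion interval $J_{n/m}$ meets $I_k$ and using $m\ge m_k$), that $|\DCgt\cap I_k|>(1-\ga/\ga^*)|I_k|$. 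You instead establish $\int_0^1 \gB\,\dd\al<\infty$ (via the Gauss measure and $m_k\ge F_{k+1}$), extract a threshold $M_0$ by Markov's inequality, and transfer positive measure into $C_n$ via the bounded distortion of $T^n|_{C_n}$. Your route is conceptually cleaner and makes the self-similarity $\gB(\al)\le\gB(T^n\al)/F_{n+1}$ explicit, whereas the paper's route is more elementary (no invariant measure, no distortion lemma), is self-contained given the continued-fraction facts in Appendix~A, and yields a sharper quantitative conclusion --- an explicit lower bound $|\DCgt\cap I_k|>(1-\ga/\ga^*)|I_k|$, which the paper uses to state a further refinement in the closing remark of the appendix.
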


\noindent 
The proof is postponed to Appendix~\ref{appPfLem}.

\medskip

It is proved in \cite{CM} that the function $q\mapsto h(q,\,.\,)$ which
describes the solution of the conjugacy equation~(\ref{eqConj}) belongs to
$\Chol\bigl( K_M^\tn{S}, H^\infty(\D_r) \bigr)$ as soon as $0 < r \le
R\,\ee^{-(3+\de)M}$ for any $\de>0$.
The conclusion follows easily from Lemma~\ref{lemNest} applied to~$A_M^\tn{S}$.

\subsection{Case of the local conjugacy problem of complex
maps of the annulus}
\label{secSDpbNLC}

Let $M > 2\ze(1+\tau)$. Observe that
$$
(0,1)\setminus A_M^\tn{C} \;\subset\; \bigcup \big(
\tfrac{n}{m}-\tfrac{M\ii}{m^{2+\tau}}, \tfrac{n}{m}+\tfrac{M\ii}{m^{2+\tau}}
\big)\cap(0,1),
$$
where the union extends to all $(n,m)\in\Z\times\N^*$ with $(n,m)=1$; in fact,
since $M>1$, we can restrict it to $0\le n\le m$, thus the Lebesgue measure of this
set is 
$$
\big|(0,1)\setminus A_M^\tn{C}\big| \le 
2M\ii + \sum_{m\ge2} \sum_{n=1}^{m-1} 2M\ii m^{-2-\tau} <
\frac{2\ze(1+\tau)}{M}.
$$
As a consequence $A_M^\tn{C}$ is closed subset of positive Lebesgue measure of~$\R$ (and
$\cI^\tn{C}$ has full measure).

As alluded to at the end of Section~\ref{secSDdom}, \cite{He} studies the regularity of the
solution of Problem~\tn{C} in
$$
\ti K_M^\tn{C} = K_M^\tn{C} \cap \ao q=\ee^{2\pi\I\al}, |\IM\al|\le R/100 \af.
$$
It is proved there that there exists $r = r(M)>0$ for which (\ref{equniqusol})
holds with~$K_M^\tn{C}$ replaced by~$\ti K_M^\tn{C}$, and that the function 
$ q \mapsto \bigl(\be(q,\,.\,),h(q,\,.\,)\bigr) $
belongs to the space $\Chol\bigl(\ti K_M^{\tn{C}},B_M\bigr)$ with 
\[ B_M = H^\infty\bigl(\{|\eps| < r(M)\},\C\times\gO(\ov S_{R/2},\C)\bigr). \]
To conclude, it is sufficient to extend this regularity property from~$\ti
K_M^\tn{C}$ to~$K_M^\tn{C}$ and to apply Lemma~\ref{lemNest} to~$A_M^\tn{C}$.
If we use the rephrasing of Problem~\tn{C} as equations~(\ref{eqdefbehv})--(\ref{eqfixedv}),
the conclusion thus follows from 
\begin{lemma}
Let $\La>0$, $\Om = \ao q=\ee^{2\pi\I\al}, |\IM\al|> \La \af$, 
$B = \gO\big(\ov S_{R/2},\C\big)$ and
$\cE = 2 +
(\ee^{2\pi R}-1)\ii + \demi\sinh^{-2}(\pi\La)$.
Then:
\begin{enumerate}[(i)]
\item For each $q\in \Om$, $E_q$ is a bounded linear operator of~$B$: $\norm{E_q v}\le\cE\norm{v}$, 
and, given $r'>0$ and a function $(q,\eps)\mapsto v_{q,\eps}\in B$ holomorphic
for $q\in\Om$ and $|\eps|<r'$, the function $(q,\eps)\mapsto E_q v_{q,\eps}\in
B$ is holomorphic too.
%
%
\item There exists $r'>0$ such that, for all $q\in \Om$ and $\eps$ such that 
$|\eps|<r'$,
there exists a unique solution~$v$ of equation~(\ref{eqfixedv}) in~$B$ close to~$0$.
Moreover, this solution~$v$ is a bounded holomorphic function of~$(q,\eps)$.
\end{enumerate}
\end{lemma}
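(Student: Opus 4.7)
The starting point is the small-divisor lower bound $|q^k-1|=2\,\ee^{-\pi k\IM\al}\,|\sin(\pi k\al)|\ge 1-\ee^{-2\pi|k|\,|\IM\al|}\ge 1-\ee^{-2\pi|k|\La}$ for $q=\ee^{2\pi\I\al}\in\Om$ and $k\ne 0$, which follows from the identity $|\sin(\pi k\al)|^2=\sin^2(\pi k\,\RE\al)+\sinh^2(\pi k\,\IM\al)$. Combined with the Cauchy-type bound $|v_k|\le\norm{v}\,\ee^{-\pi|k|R}$, obtained by shifting the contour defining $v_k=\int_0^1 v(\th)\,\ee^{-2\pi\I k\th}\,\dd\th$ to $\IM\th=\mp R/2$ according to the sign of~$k$, this controls each term of $E_q v(\th)=\sum_{k\ne0}v_k(q^k-1)\ii\ee^{2\pi\I k\th}$ in the open strip $S_{R/2}$, where the series converges absolutely. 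Splitting over positive and negative modes and choosing, for each~$k$, the contour defining $v_k$ on the side opposite to $\IM\th$, the estimate reduces to Lambert-type sums of the form $\sum_{k\ge 1}\ee^{-2\pi kR}(1-\ee^{-2\pi k\La})\ii$ and $\sum_{m\ge 1}(\ee^{2\pi m\La}-1)\ii$; using the splitting $1/(1-x)=1+x/(1-x)$ on the first and the expansion $(\ee^{2\pi m\La}-1)\ii=\sum_{j\ge1}\ee^{-2\pi m\La j}$ on the second, routine geometric majorisations yield exactly the three summands $2$, $(\ee^{2\pi R}-1)\ii$ and $\demi\sinh^{-2}(\pi\La)$ that add up to~$\cE$. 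Passing from $S_{R/2}$ to $\ov S_{R/2}$ is then a matter of continuity.

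\textbf{Joint holomorphy in~(i).} The Fourier coefficients $(v_{q,\eps})_k=\int_0^1 v_{q,\eps}(\th)\,\ee^{-2\pi\I k\th}\,\dd\th$ are jointly holomorphic in $(q,\eps)$ by differentiation under the integral, each truncation $\sum_{|k|\le N}(v_{q,\eps})_k(q^k-1)\ii\ee^{2\pi\I k\th}$ is jointly holomorphic in $(q,\eps,\th)$, and the uniform convergence on compacts supplied by the bound above transfers the joint holomorphy to the limit $E_q v_{q,\eps}\in B$ via Weierstrass.

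\textbf{Plan for (ii).} Apply Banach's contraction principle to $\Phi_{q,\eps}(v)=\eps\,g\circ(\ID+E_q v)$ on a closed ball $\ov B_r=\{v\in B:\norm{v}\le r\}$. Taking $r>0$ small enough that $\cE r<R/2$ ensures $\th+E_q v(\th)\in\ov S_R$ whenever $\th\in\ov S_{R/2}$ and $v\in\ov B_r$, so that $g\circ(\ID+E_q v)$ is well-defined and bounded by $\norm{g}_{\ov S_R}$. The self-map property holds as soon as $|\eps|\,\norm{g}_{\ov S_R}\le r$; a Cauchy estimate for $g$ on a strip strictly contained in $\ov S_R$ bounds $\norm{g'}$ and yields a Lipschitz constant of the form $|\eps|\cdot\cE\cdot\norm{g}_{\ov S_R}/\delta$, which is strictly less than~$1$ for $r'=r'(M)>0$ sufficiently small, uniformly in $q\in\Om$. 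Joint holomorphy of the unique fixed point $(q,\eps)\mapsto v(q,\eps)$ then follows either from the analytic implicit function theorem in Banach spaces applied to $(q,\eps,v)\mapsto v-\Phi_{q,\eps}(v)$ (whose partial differential in~$v$ at the solution is $\ID+O(|\eps|)$, hence invertible), or from the observation that the Picard iterates $v_0\equiv0$, $v_{n+1}=\Phi_{q,\eps}(v_n)$ are jointly holomorphic by part~(i) and converge uniformly on compact subsets of $\Om\times\{|\eps|<r'\}$.

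\textbf{Main obstacle.} The only delicate step is the norm bound in part~(i) with the explicit constant~$\cE$: the bookkeeping needed to match the two Fourier half-series against their Lambert-type majorants and assemble precisely $2+(\ee^{2\pi R}-1)\ii+\demi\sinh^{-2}(\pi\La)$ requires care, and the extension from the open strip to its closure must be handled by continuity. Once that estimate is in hand, both the holomorphy in~(i) and the whole of~(ii) follow from standard functional-analytic arguments.
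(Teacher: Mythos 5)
Your plan for part (ii) is sound and matches the paper's contraction-mapping argument with Picard iterates (or the analytic implicit function theorem); the self-map and Lipschitz estimates are handled the same way.

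Part (i), however, has a genuine gap. The naive termwise Fourier estimate for $E_q v$ does \emph{not} yield a uniform bound on the closed annulus $\ov S_{R/2}$. For the positive-frequency block $\sum_{k\ge1}v_k(q^k-1)\ii\ee^{2\pi\I k\th}$, the optimal Cauchy estimate $|v_k|\le\norm{v}\ee^{-\pi kR}$ combined with $|\ee^{2\pi\I k\th}|=\ee^{-2\pi k\IM\th}$ gives $|v_k\ee^{2\pi\I k\th}|\le\norm{v}\ee^{-\pi k(R+2\IM\th)}$, and the exponent vanishes as $\IM\th\to -R/2$. Since $|q^k-1|\ii\le(1-\ee^{-2\pi k\La})\ii$ tends to~$1$ as $k\to\infty$, the resulting termwise majorant $\sum_{k\ge1}(1-\ee^{-2\pi k\La})\ii$ diverges at that boundary. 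No adaptive choice of contour can improve on $|v_k|\le\ee^{-\pi|k|R}\norm{v}$, so the factor $\ee^{-2\pi kR}$ in your Lambert-type sum is justified only on the ``good'' half of the boundary (namely $\IM\th=+R/2$ for $k>0$), not uniformly on $\ov S_{R/2}$, and ``passing from the open to the closed strip by continuity'' presupposes precisely the uniform bound you haven't established.

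The paper's resolution is genuinely different. After the split $\frac{1}{q^k-1}=-1-\frac{q^k}{1-q^k}$ (which you do have), the ``$-1$'' accumulates to $-\Pi^+ v$, the full positive-frequency projection, which cannot be bounded termwise at $\IM\th=-R/2$. The paper instead uses the identity $\Pi^+ v=(v-\Pi_0 v)-\Pi^- v$ together with the maximum modulus principle: $\Pi^+ v$ is holomorphic in the half-plane $\IM\th>-R/2$, so its sup on $\ov S_{R/2}$ is attained on $\IM\th=-R/2$, and \emph{there} the re-representation is bounded by $2\norm{v}$ (from $v-\Pi_0 v$) plus $(\ee^{2\pi R}-1)\ii\norm{v}$ (from $\Pi^- v$, which at that boundary does decay at rate $\ee^{-2\pi kR}$). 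This re-representation-plus-maximum-principle step is the missing idea in your plan: it simultaneously produces the summand $2+(\ee^{2\pi R}-1)\ii$ of~$\cE$ and establishes continuity of $E_q v$ up to $\ov S_{R/2}$. The remaining piece $\demi\sinh^{-2}(\pi\La)$ comes from $\sum_{k\ge1}\big|\tfrac{q^k}{1-q^k}\big|\,\norm{-\Pi_k+\Pi_{-k}}\le 2\sum_{k\ge1}\ee^{-2\pi k\La}/(1-\ee^{-2\pi\La})$, an estimate at the level of operator norms, not a Lambert-type sum over Fourier modes; note also that a direct bound of $\sum_{m\ge1}(\ee^{2\pi m\La}-1)\ii$ gives only \emph{half} of $\demi\sinh^{-2}(\pi\La)$, because the factor $2$ in $\norm{-\Pi_k+\Pi_{-k}}\le2$ is lost.
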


\begin{proof}
As a preliminary, we introduce the following notation:
for $k\in\Z$, $e_k$ will denote the function $\th\mapsto\ee^{2\pi\I k\th}$, and
if
$v = \sum_{k\in\Z} v_k e_k\in B$, 
$$
\Pi^+ v = \sum_{k\ge1} v_k e_k, \qquad
\Pi^- v = \sum_{k\ge1} v_{-k} e_{-k}, \qquad
\Pi_k v = v_k e_k, \qquad k\in\Z.
$$
Since the Fourier coefficients of~$v$ can be computed as 
\[
v_k = \int_{\I\rho}^{1+\I\rho} v(\th)\,\ee^{-2\pi\I k\th}\,\dd\th
\]
for any $\rho\in\big[-\frac{R}{2},\frac{R}{2}\big]$, it follows that 
\begin{equation}	\label{ineqFouriercoeff}
|v_k| \le \ee^{-\pi |k| R} \norm{v}, \qquad k\in\Z,
\end{equation}
hence, with the notation~(\ref{eqdefSab}), $\Pi^\pm v$ is holomorphic in $S^+ =
S_{-{R}/{2},+\infty}$ or $S^- = S_{-\infty,{R}/{2}}$ and is in fact a function
of $z^\pm = \ee^{\pm 2\pi\I\th}$ holomorphic for $|z^\pm|<\ee^{\pi R}$.
More than this, one has
\begin{equation}	\label{ineqPipm}
\begin{aligned}
%
%
&\text{$\Pi^\pm v$ extends continuously to $\ov S^\pm$,}\\
&|\Pi^\pm v(\th)| \le \Big( 2 + \frac{1}{\ee^{2\pi R}-1} \Big)\norm{v}
\ens\text{for $\th\in\ov S^\pm$.}
\end{aligned}
\end{equation}
Indeed, in the case of the `$+$' sign for instance, $\Pi^+ v = v-\Pi_0 v - \Pi^-
v$ where $v-\Pi_0 v$ is holomorphic in~$S^+$, continuous in~$\ov S^+$ and
bounded by $2\norm{v}$, while $\Pi^- v$ is holomorphic in any
neighbourhood~$\cN$ of $\{\IM\th=-R/2\}$ contained in~$S^-$, hence this
representation of~$\Pi^+ v$ gives a continuous extension to $\cN\cap\ov S^+$;
by the maximal modulus principle, the maximum of~$|\Pi^+ v|$ is attained for
$|z^+|=\ee^{\pi R}$, it is thus equal to
$
\max \big\{|\Pi^+ v(\th)|,\:\IM\th=-R/2\big\} \le 2\norm{v} + 
%
%
\max \big\{ |\Pi^- v(\th)|,\: \IM\th=-R/2 \big\}
%
%
$
and (\ref{ineqPipm}) follows from 
$\IM\th=-R/2 \Rightarrow |\ee^{-2\pi\I k\th}|=\ee^{-\pi k R}$
and from~(\ref{ineqFouriercoeff}) applied to $|v_{-k}|$
for $k\ge1$.

We can rephrase~(\ref{ineqFouriercoeff}) and~(\ref{ineqPipm}) as statements
about bounded linear operators:
\begin{equation}	\label{ineqLB}
\Pi_k,\Pi^\pm \in \cL(B), \quad
\norm{\Pi_k}\le1, \quad
\norm{\Pi^\pm}\le 2 + \frac{1}{\ee^{2\pi R}-1}.
\end{equation}

\medskip

{\em (i)} 
Assume $q=\ee^{2\pi\I\al}\in\Om$ with $|q|<1$, \ie\ $\IM \al > \La$ and
$|q|<\ee^{-2\pi\La}$ (we would argue in a symmetric way in the case $|q|>1$),
and let $v\in B$.
Writing $\frac{1}{q^k-1} = -1-\frac{q^k}{1-q^k}$ and
$\frac{1}{q^{-k}-1} = \frac{q^k}{1-q^k}$ for $k\ge1$, we get
\begin{equation}	\label{eqEqPI}
E_q v = -\Pi^+ v + \sum_{k\ge1} \frac{q^k}{1-q^k} (-\Pi_k+\Pi_{-k})v
\end{equation}
On the one hand, $|1-q^k| \ge 1-|q^k| \ge 1-\ee^{-2\pi\La}$ and $|q^k|\le
\ee^{-2\pi k\La}$, 
on the other hand $-\Pi_k+\Pi_{-k}\in\cL(B)$ has operator norm~$\le2$
by~(\ref{ineqLB}), hence (\ref{eqEqPI}) yields a representation of~$E_q$ as an
absolutely convergent series of bounded linear operators and
$$
\norm{E_q} \le \norm{\Pi^+} + 
\sum_{k\ge1} \frac{\ee^{-2\pi k\La}}{1-\ee^{-2\pi\La}}\norm{-\Pi_k+\Pi_{-k}}
\le  2 + \frac{1}{\ee^{2\pi R}-1} +
\frac{2\ee^{-2\pi\La}}{(1-\ee^{-2\pi\La})^2},
$$
which was the desired bound.
Moreover, the operators $\Pi^+$ and $-\Pi_k+\Pi_{-k}$ are independent of
$(q,\eps)$, the coefficients $\frac{q^k}{1-q^k}$ are holomorphic functions
of~$q$ and the above convergence was uniform in~$q$, hence we obtain the
holomorphic dependence on $(q,\eps)$.

\medskip

{\em (ii)} 
Let $C=\max_{\ov S_R}|g|$ and $r' = \frac{R}{8\cE C}$. We shall prove the
statement with this value of~$r'$ by means of the contraction principle.

For any $q\in\Om$ and $\eps\in\C$ such that $|\eps| < r'$, we define a
map~$\FF$ on the ball $\gV = \ao v\in B \mid \norm{v}\le r'C \af$ by the formula
$$
\FF(v)\,: \; \th\in\ov S_{R/2} \ens\mapsto\ens \eps g\big(\th+E_q v(\th)\big)
\qquad\text{for $v\in \gV$}.
$$
The function $\FF(v)$ is well-defined when $v\in\gV$ because, 
for every $\th\in\ov S_{R/2}$, $|E_q v(\th)|\le \cE\norm{v}\le R/8$, hence
\begin{equation}	\label{eqthEqv}
\th+E_q v(\th)\in S_{3R/4}.
\end{equation}
This function clearly belongs to~$B$, with $\norm{\FF(v)}\le|\eps|C\le r'C$.
Therefore, $\FF$ is a self-map of~$\gV$.

Suppose $v_1,v_2\in\gV$. For each $\th\in\ov S_{R/2}$, (\ref{eqthEqv}) and the fact that
$\max_{\ov S_{3R/4}}|g'| \le 4C/R$ imply that 
\[
|\FF(v_2)(\th) - \FF(v_1)(\th)| \le \frac{4C\cE|\eps|}{R} \norm{v_2-v_1}\le
\demi\norm{v_2-v_1}.
\]
We thus get a unique fixed point for~$\FF$, \ie\ a unique solution of~(\ref{eqfixedv}), in~$\gV$.
This solution can be written as the sum of the series
$$
%
\FF(0) + \sum_{k\ge0} \big( \FF^{k+1}(0) - \FF^k(0) \big)
%
$$
which is absolutely convergent in~$B$. 

Let us check that this fixed point depends holomorphically on~$(q,\eps)$ for
$q\in\Om$ and $|\eps| < r'$.
Since $\norm{\FF^{k+1}(0) - \FF^k(0)}\le \frac{1}{2^k}\norm{\FF(0)}$,
it is sufficient to check that the functions $(q,\eps)\mapsto \FF^k(0)$ are
holomorphic. 
This follows by induction from the fact that, if $(q,\eps)\mapsto
v_{q,\eps}\in\gV$ is holomorphic for $q\in\Om$ and $|\eps| < r'$, then
$(q,\eps)\mapsto \FF(v_{q,\eps})$ is holomorphic too
(proof: since $B$ is a Banach algebra, $\FF(v_{q,\eps})$ can be written as the
sum of the uniformly convergent series
$\eps\sum_{\ell\ge0} g_\ell (E_q v_{q,\eps})^\ell $, 
where $g_\ell = \frac{1}{\ell!}{g^{(\ell)}}_{|\ov S_{R/2}}$, 
$\norm{g_\ell}\le C (2/R)^\ell$,
$\norm{(E_q v_{q,\eps})^\ell}\le (\cE r' C)^\ell$).

\end{proof}


\appendix

\section{Appendix}



\subsection{Continued fractions}	\label{appCF}
%
%
We indicate here our notations and a few facts that we use in
Part~B, referring the reader \eg\ to \cite{HW} for an exposition of
the theory.

Given $x \in \R \setminus \Q$, we set
$a_0 = [x]\in\Z$, $\xi_0 = x - [x]\in (0,1)$, and inductively
$a_{k+1} = [ \xi_k\ii] \in\N^*$, $\xi_{k+1} = \xi_k\ii - [\xi_k\ii]\in (0,1)$, 
hence
$$
x = a_0 + \cfrac{1}{ a_1 +
          \cfrac{1}{\ens \;\ddots\; +
%
             \cfrac{1}{ a_k + \xi_k}}}.
%
$$
Dropping $\xi_k$ in the last expression, we get a rational number called the $k$th
{\em convergent} of~$x$ and denoted by $[a_0,a_1,\ldots,a_k]$.
We denote by $\frac{n_k}{m_k}$ the reduced expression of this rational.

The $a_k$'s are called {\em partial quotients} of~$x$; notice that, for $k\ge1$, they are all
positive integers.
We sometimes write $a_k(x)$, $n_k(x)$, $m_k(x)$, considering the partial
quotients and the convergents as functions of $x\in\R\setminus\Q$.

Numerators and denominators of the convergents can be obtained from the recursive formulas
\begin{equation}	\label{eqInducnkmk}
n_k = a_k n_{k-1} + n_{k-2}, \quad m_k = a_k m_{k-1} + m_{k-2}, 
\end{equation}
with initial conditions $n_{-1}=1$, $n_{-2}=0$, $m_{-1}=0$, $m_{-2}=1$.
They satisfy
\begin{gather}	
\label{eqBezout}
m_k n_{k-1} - n_k m_{k-1} = (-1)^k, \\
		\label{eqxxik}
x = \frac{n_k + \xi_k n_{k-1}}{m_k + \xi_k m_{k-1}}.
\end{gather}
The formulas~(\ref{eqInducnkmk}) imply that $(m_k)_{k\ge1}$ is an increasing
sequence of positive integers bounded from below by the Fibonacci numbers:
\begin{equation}	\label{eqFibo}
m_{k}\ge F_{k+1} \ge \Phi^{k-1}, \quad 
F_k = \frac{\Phi^k + (-1)^{k+1}\ph^k}{\sqrt{5}},
\qquad k\ge0,
\end{equation}
with $\ph = \frac{\sqrt{5}-1}{2} \in (0,1)$ and $\Phi = 1/\ph = 1+\ph$ (golden ratio).

The convergents $n_k/m_k$ converge to~$x$ at least geometrically; more precisely,
\begin{equation}	\label{ineqCVk}
\frac{1}{2 m_{k+1}} < |m_k x - n_k| < \frac{1}{m_{k+1}}, \qquad  k\ge1,
\end{equation}
$\frac{n_k}{m_k} < x < \frac{n_{k+1}}{m_{k+1}}$ for even $k$ (reverse
inequalities for odd~$k$),
$\left| \frac{n_{k+1}}{m_{k+1}} - \frac{n_k}{m_k} \right| = \frac{1}{m_k m_{k+1}}$.
According to the classical {\em law of best approximation}, if a rational
$\frac{n}{m}$ does not belong to the sequence of convergents and if $k\in\N^*$,
then
\begin{equation}	\label{ineqLawBA}
m \le m_{k+1}  \quad\Rightarrow \quad
| m x - n | > | m_k x - n_k |.
\end{equation}
As a partial converse to~(\ref{ineqCVk}), we mention the fact that if a
rational $n/m$ satisfies $| m x - n | < \frac{1}{2m}$, then it necessarily
belongs to the sequence of convergents of~$x$.


\subsection{Proof of Lemma~\ref{lemAMR}}
\label{appPfLem}
%
%
For $\tau\ge0$, we introduce the Diophantine numbers of~$(0,1)$ of exponent
$2+\tau$:
$$
\DCgt = \bigl\{\, x\in (0,1) \mid 
\forall \tfrac{n}{m}\in\Q\cap(0,1),\;
\left| x - \frac{n}{m} \right| \ge \frac{\ga}{m^{2+\tau}} \,\bigr\},
\qquad \ga>0
$$
(the smaller~$\ga$, the larger $\DCgt$).
The set of ``constant-type points'' of~$(0,1)$ can be defined as
$\bigcup_{\ga>0} \DC_{\ga,2}$, but it won't be of any use for us here because
it has zero measure, whereas it is well-known that the larger sets $\bigcup_{\ga>0}
\DC_{\ga,\tau}$ have full measure for all $\tau>0$ (see Section~\ref{secSDpbNLC}; $\DCgt$ is
essentially $A_{\ga\ii}^\tn{S}\cap(0,1)$).
It follows from~(\ref{ineqCVk}) that all these numbers satisfy the Bruno
condition~(\ref{eqBrunoClassic}), which we saw is equivalent to $\gB(x)<\infty$.
For every $M>0$, we shall find a positive measure subset of them for which
$\gB(x)\le M$.
The starting point is the observation (obvious from the
definition~(\ref{Brunoseries})) that 
$$
a_1(x)=\cdots=a_k(x)=1 \ens\Rightarrow\ens
\gB(x) = \sum_{\ell\ge k} \frac{\log a_{\ell+1}(x)}{m_\ell(x)}.
$$

Given $k\in\N^*$ and $a_1,\ldots,a_k\in\N^*$, we set
$\frac{n_\ell}{m_\ell}=[0,a_1,\ldots,a_\ell]$ for $0\le\ell\le k$;
the corresponding ``interval of rank~$k$'' (see \cite{Khin}) is then defined as
$$
I(a_1,\ldots,a_k) = 
\Bigl\{ \frac{n_k + \ze n_{k-1}}{m_k + \ze m_{k-1}}, \; \ze\in (0,1) \Bigr\}
= \left| \begin{aligned}
(\tfrac{n_k}{m_k}, \tfrac{n_k+n_{k-1}}{m_k+m_{k-1}}) \quad\text{if $k$ is even} \\[1ex]
(\tfrac{n_k+n_{k-1}}{m_k+m_{k-1}}, \tfrac{n_k}{m_k}) \quad\text{if $k$ is odd.}
\end{aligned} \right.
$$
Then $I(a_1,\ldots,a_k) \setminus \Q = \{\, x\in(0,1)\setminus\Q \mid 
a_\ell(x) = a_\ell \; \text{for $1\le \ell \le k$} \,\}$
(compare with~(\ref{eqxxik})).

The case where $a_\ell=1$ for $1\le\ell\le k$ is related to the golden ratio
conjugate $\ph=[0,1,1,\ldots]$, for which
$$
n_\ell(\ph) = F_\ell, \quad m_\ell(\ph) = F_{\ell+1}, \qquad \ell\in\N.
$$
We then abbreviate the previous notation:
$$
I_k = I(\,\underset{\text{$k$ times}}{\underbrace{1, 1, \ldots, 1}}\,),
$$
which is an open interval of length $|I_k| = \frac{1}{F_{k+1}F_{k+2}}$ 
(from now on, we simply denote by~$|\,.\,|$ the Lebesgue measure on~$\R$).

With these preliminaries, Lemma~\ref{lemAMR} follows from the more precise 

\begin{lemma}

\smallskip

\noindent (i)\; 
For each $M>0$, $\tau\ge0$ and $\ga>0$, there exists $\bar k\in\N^*$ such that
$$
k\ge\bar k  \ens\Rightarrow\ens  \DCgt \cap I_k \subset A_M^\tn{S}.
$$

\smallskip

\noindent (ii)\; 
There exists $\ga^*>0$ such that, for $\tau\ge1$ and $0<\ga<\ga^*$,
$$
k\ge2  \ens\Rightarrow\ens  
\left|\DCgt \cap I_k\right| > \Bigl( 1 - \frac{\ga}{\ga^*} \Bigr) \left|I_k\right|.
$$
\end{lemma}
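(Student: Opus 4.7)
The plan treats parts (i) and (ii) separately.

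For part (i), I would use $x\in I_k$ to write $a_1(x)=\dots=a_k(x)=1$ (so $m_\ell(x)=F_{\ell+1}$ for $\ell\le k$), which reduces the definition~(\ref{Brunoseries}) to
\begin{equation*}
\gB(x)=\sum_{\ell\ge k}\frac{\log a_{\ell+1}(x)}{m_\ell(x)}.
\end{equation*}
For $x\in\DCgt$, the Diophantine bound $|m_\ell x-n_\ell|\ge\ga\, m_\ell^{-1-\tau}$ combined with $|m_\ell x-n_\ell|<1/m_{\ell+1}$ from~(\ref{ineqCVk}) gives $m_{\ell+1}<\ga^{-1}m_\ell^{1+\tau}$, and hence $a_{\ell+1}(x)\le m_{\ell+1}/m_\ell<\ga^{-1}m_\ell^\tau$ via~(\ref{eqInducnkmk}). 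Substituting yields
\begin{equation*}
\gB(x)\le\sum_{\ell\ge k}\frac{\tau\log m_\ell(x)+\log(\ga^{-1})}{m_\ell(x)},
\end{equation*}
and since $m_\ell(x)\ge\Phi^{\ell-1}$ by~(\ref{eqFibo}), this is the tail of a convergent series going to $0$ as $k\to\infty$, uniformly in $x$; a large enough $\bar k$ then makes the tail $\le M$.

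For part (ii), I would estimate $|I_k\setminus\DCgt|$ by summing $2\ga\, m^{-2-\tau}$ over the rationals $n/m$ whose Diophantine exclusion neighborhood intersects $I_k$, splitting according to the size of $m$. In the \emph{large-denominator} regime $m\ge F_{k+1}$, the rationals of denominator $m$ in $(0,1)$ are $1/m$-spaced, so the number of those whose neighborhood meets $I_k$ is at most $m|I_k|+2$, leading to
\begin{equation*}
\sum_{m\ge F_{k+1}}(m|I_k|+2)\cdot\frac{2\ga}{m^{2+\tau}}\le 2\ga|I_k|\sum_{m\ge F_{k+1}}m^{-1-\tau}+4\ga\sum_{m\ge F_{k+1}}m^{-2-\tau},
\end{equation*}
which, for $\tau\ge1$, evaluates (by comparison with integrals and using $|I_k|=(F_{k+1}F_{k+2})^{-1}$) to $O(\ga|I_k|)$.

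In the \emph{small-denominator} regime $m<F_{k+1}$, any $x\in I_k$ shares its first $k$ convergents with $\ph=[0;1,1,\dots]$, so the law of best approximation~(\ref{ineqLawBA}) forces the relevant $n/m$ to lie among the convergents $F_\ell/F_{\ell+1}$, $\ell\le k$. A direct geometric check — using that $I_k\subset I_{\ell+1}$ sits next to the endpoint $F_{\ell+1}/F_{\ell+2}$ of $I_\ell$ — yields $\dist(F_\ell/F_{\ell+1},I_k)\ge(F_{\ell+1}F_{\ell+3})^{-1}$ for $\ell<k$, which exceeds the admissible neighborhood size $\ga/F_{\ell+1}^{2+\tau}$ once $\ga$ lies below an absolute threshold. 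Only the endpoint convergent $F_k/F_{k+1}$ of $I_k$ survives, contributing at most $\ga F_{k+1}^{-2-\tau}=O(\ga|I_k|)$ for $\tau\ge1$. Adding the two regimes gives $|I_k\setminus\DCgt|\le C\ga|I_k|$ for an absolute constant $C$, and $\ga^*=1/C$ (intersected with the small-denominator threshold) does the job. The main obstacle is precisely this small-denominator case: without exploiting the Fibonacci structure of $I_k$ through the law of best approximation, one cannot a priori rule out a profusion of rationals with small $m$ whose Diophantine neighborhoods bite into $I_k$; the structure of $I_k$ near $\ph$ is what collapses the count to essentially a single endpoint convergent.
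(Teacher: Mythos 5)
Your argument is correct and takes essentially the same route as the paper: in~(i), you derive $a_{\ell+1}<\ga^{-1}m_\ell^\tau$ from~(\ref{ineqCVk}) and sum a geometrically decaying tail, and in~(ii), you split the excluded rationals by denominator, show that the small-denominator ones must be convergents of~$\ph$ whose Diophantine neighbourhoods miss~$I_k$ once $\ga$ is small (the right tool here is the ``partial converse to~(\ref{ineqCVk})'' stated at the end of Appendix~\ref{appCF}, rather than~(\ref{ineqLawBA}) which you cite), and count the large-denominator ones by $1/m$-spacing, just as the paper does. The only cosmetic differences: your distance bound $1/(F_{\ell+1}F_{\ell+3})$ treats $\ell=k-1$ uniformly with $\ell\le k-2$ where the paper handles $\ell=k-1$ separately, and the endpoint convergent $F_k/F_{k+1}$ has denominator $F_{k+1}$, so it is already subsumed in your large-denominator count rather than needing the separate mention you give it at the end of the small-denominator discussion.
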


\noindent {\it Proof of (i).}
Let $M>0$, $\tau\ge0$, $\ga>0$ and $k\in\N^*$. 

If $x\in\DCgt$ and $\ell\in\N^*$, then
$\frac{\ga}{m_\ell^{1+\tau}} \le | m_\ell x - n_\ell | < \frac{1}{m_{\ell+1}}$
by~(\ref{ineqCVk}) (denoting by $\frac{n_1}{m_1},\frac{n_2}{m_2},\ldots$
the convergents of~$x$), and $m_{\ell+1} = a_{\ell+1} m_{\ell} + m_{\ell-1} >
a_{\ell+1} m_{\ell}$, hence
$$ a_{\ell+1} < \frac{m_\ell ^\tau}{\ga}. $$
If we assume moreover $x\in I_k$, then $a_1=\cdots=a_k=1$ and 
$$
\gB(x) < \log(\ga\ii) \sum_{\ell\ge k} \frac{1}{m_\ell} +
 \tau \sum_{\ell\ge k} \frac{\log m_\ell}{m_\ell} 
< \log(\ga\ii) \sum_{\ell\ge k} \frac{1}{m_\ell} +
 \tau \sum_{\ell\ge k} \frac{2}{\sqrt{m_\ell}}. 
$$
The inequalities~(\ref{eqFibo}) thus yield
$\gB(x) < \dst\sum_{\ell\ge k} \left(\log(\ga\ii) \ph^{\ell-1} + 2\tau \ph^{\frac{\ell-1}{2}}\right)$,
which can be made $\le M$ by choosing~$k$ large enough since the series is convergent.

\bigskip

\noindent {\it Proof of (ii).}
Let $\tau \ge 1$, $k\ge2$ and suppose
$0<\ga<\frac{1}{3}$ to begin with.
We have
$$
I_k \setminus \DCgt = \bigcup_{{n}/{m}\in\Q\cap(0,1)} J_{n/m} \cap I_k,
\quad\text{with}\ens J_{{n}/{m}} = \left( \frac{n}{m}-\frac{\ga}{m^{2+\tau}},
\frac{n}{m}+\frac{\ga}{m^{2+\tau}} \right).
$$
Our goal is to ensure $| I_k \setminus \DCgt | < \frac{\ga}{\ga^*}|I_k|$ for suitable~$\ga^*$.
Obviously,
\begin{multline}	\label{eqInclus}
I_k \setminus \DCgt \subset \bigcup_{{n}/{m}\in\Q_{\ga,\tau,k}} J_{n/m}, \\
\quad
\text{with}\ens \Q_{\ga,\tau,k} = \left\{\, \tfrac{n}{m}\in\Q\cap(0,1) \mid
J_{n/m} \cap I_k \neq \emptyset \,\right\}.
\end{multline}
From now on, we shall denote by $\frac{n_1}{m_1},\frac{n_2}{m_2},\ldots$
the convergents of~$\ph$, specifying the argument only when referring to a point
possibly different from~$\ph$; thus, $\frac{n_\ell}{m_\ell} \equiv \frac{F_\ell}{F_{\ell+1}}$.
We first establish that any ${n}/{m}\in\Q_{\ga,\tau,k}$ has $m\ge m_k$.

Indeed, for such a rational (which we suppose written in least terms), we may
choose an irrational $x$ in the non-empty open interval $I_k \cap J_{n/m}$, for
which
$$
\left| x - \frac{n}{m} \right| < \frac{\ga}{m^{2+\tau}} < \frac{1}{2 m^2}.
$$
The ``partial converse to~(\ref{ineqCVk})'' of the end of Appendix~\ref{appCF}
yields $\ell\in\N$ such that $\frac{n}{m}=\frac{n_\ell(x)}{m_\ell(x)}$.
Let us check that $\ell$ cannot be $\le k-2$ by contradiction: 
this would lead to 
\[
\frac{n_\ell(x)}{m_\ell(x)} = \frac{n_\ell}{m_\ell}
\quad\text{and}\quad
\frac{n_{\ell+2}(x)}{m_{\ell+2}(x)} = \frac{n_{\ell+2}}{m_{\ell+2}}
\]
(since $x\in I_k$ and $\ell+2\le k$),
with both of these rationals on the same side of~$x$, whence
\[
\left| x - \tfrac{n}{m} \right| > \left| 
\frac{n_{\ell+2}}{m_{\ell+2}} - \frac{n_\ell}{m_\ell}
\right| = \frac{1}{m_{\ell}m_{\ell+2}}
\]
(the last identity results from~(\ref{eqBezout}) applied twice),
and since 
\[
\frac{\ga}{m^{2+\tau}} = \frac{\ga}{m_{\ell}^{2+\tau}} 
> \left| x - \tfrac{n}{m} \right|,
\] 
we would get
$\ga > \frac{m_{\ell}^{1+\tau}}{m_{\ell+2}}$, which is easily seen to be 
$\ge 1/3$.

In fact, $\ell$ cannot be equal to~$k-1$ either, for this would lead to
$\tfrac{n_k+n_{k-1}}{m_k+m_{k-1}}$ lying between
$\frac{n}{m}=\frac{n_{k-1}}{m_{k-1}} \notin I_k$ 
and the points of~$I_k$ (because this point is the $(k+1)$th convergent of~$\ph$
and thus lies on the same side of~$\ph$ as $\frac{n_{k-1}}{m_{k-1}}$),
hence
\[
\left| \frac{n_{k-1}}{m_{k-1}} - \tfrac{n_k+n_{k-1}}{m_k+m_{k-1}} \right|
< \left| x - \frac{n_{k-1}}{m_{k-1}} \right| < \frac{\ga}{m_{k-1}^{2+\tau}},
\]
where the left-hand side is 
\[
\frac{1}{m_{k-1}(m_{k-1}+m_k)} > \frac{1}{2m_{k-1}m_k},
\]
whence $\ga > \frac{m_{k-1}^{1+\tau}}{2 m_k} > 1$, a contradiction.
Hence, $\ell \ge k$ and $m = m_\ell(x) \ge m_{k}(x) = m_k$ (because $x\in
I_k$).

\medskip

Now, let $p_m$ denote, for any $m\in\N^*$, the number of integers~$n$ such that
$n/m\in\Q_{\ga,\tau,k}$. We just saw that $p_m=0$ for $m<m_k$; we now prove
that $p_m < 3 + m \left| I_k \right|$.

Suppose indeed $p_m\ge 3$ for a given $m$ (which is necessarily $\ge m_k$), and
let $n^-$ and $n^+$ denote the minimal and maximal numerators such that
$n/m\in\Q_{\ga,\tau,k}$; thus $p_m = n^+-n^-+1$.
As the intervals $J_{n/m}$ are disjoint (they are separated by a distance $\frac{1}{m}
- \frac{2\ga}{m^{2+\tau}} > \frac{1}{m} - \frac{1}{m^2}>0$), we see that 
both $\frac{n^-+1}{m}$ and $\frac{n^+-1}{m}$ belong to~$I_k$, hence 
$\frac{n^+-n^--2}{m} < \left| I_k \right|$, which yields the desired inequality.

\medskip

Therefore, (\ref{eqInclus}) implies that
$$
\left| I_k \setminus \DCgt \right| \le \sum_{m\ge m_k} \frac{2\ga p_m}{m^{2+\tau}} 
< \bigl( 
6 Z_{1+\tau}(m_k) + 2 \left| I_k \right| Z_{\tau}(m_k)
\bigr) \ga,
$$
with the notation 
\[
Z_\al(N) = \sum_{m\ge N} \frac{1}{m^{1+\al}} <
\frac{1}{\al(N-1)^\al} 
\quad \text{for $N\ge2$.}
\]
Observe that $|I_k| = \frac{1}{m_k(m_k+m_{k-1})}> \frac{1}{2 m_k^2}$, hence
$\frac{Z_{1+\tau}(m_k)}{|I_k|} < 2 m_k^2 Z_2(m_k) < 4$,
while $Z_\tau(m_k) \le Z_1(2) \le 1$,
thus $\ga^*=1/26$ will do.
\qed

\vspace{8pt}

\noindent {\em Remark}.\;
A simple adaptation of the above proof of~{\it(ii)} yields the following more
general result:
for each $\bar\ga>0$ and $\bar\tau\ge0$, 
for each $\bar x = [0,a_1,a_2,\ldots]\in\DC_{\bar\ga,\bar\tau}$
and $\tau \ge \max(1,\bar\tau)$,
\begin{multline*}
0 <\ga < \min\Bigl(\frac{1}{26},\frac{\bar\ga}{2}\Bigr)
\ens\text{and}\ens k\ge2 \ens\Rightarrow\ens \\
\left|\DCgt \cap I(a_1,\ldots,a_k)\right| > 
\Bigl( 1 - \frac{\ga}{\ga^*} \Bigr) \left|I(a_1,\ldots,a_k)\right|.
\end{multline*}




\bigskip

%
\noindent {\em Acknowledgements.}
We warmly thank A.~Avila who encouraged us to study more deeply the
quasianalyticity issue after having heard us reporting on \cite{MS1} during the
trimester on dynamical systems held at the Centro Ennio de Giorgi (Pisa) in
2002, G.~Forni who on the same occasion attracted our attention to
Privalov's theorem, which was the starting point of this work,
and Carlo Carminati for fruitful conversations.


\bigskip

\bigskip

\bigskip

\noindent
{\bf Stefano Marmi}

\noindent
Scuola Normale Superiore; Piazza dei Cavalieri 7, 56126 Pisa, Italy\\
(e-mail: {\tt s.marmi@sns.it})

\medskip

\noindent
{\bf David Sauzin}

\noindent
Scuola Normale Superiore di Pisa and\\
Institut de m\'ecanique c\'eleste, CNRS; 77 av.\ Denfert-Rochereau, 75014 Paris, France\\
(e-mail: {\tt sauzin@imcce.fr})

\end{document}